\documentclass{sn-jnl}

\usepackage[english]{babel}

\usepackage{amsmath}
\usepackage{graphicx}
\usepackage{subcaption}
\usepackage{tikz}
\usetikzlibrary{shapes.geometric}
\usetikzlibrary{positioning}

\usepackage[
backend=biber,
sorting=nyt,
maxbibnames=99,
url=false,
]{biblatex}
\usepackage{amsmath}
\usepackage{amsthm}
\usepackage{amssymb}
\usepackage{amsfonts}
\usepackage{bbm}
\usepackage[utf8]{inputenc}
\usepackage[dvipsnames]{xcolor}
\usepackage{tikz-cd}
\tikzcdset{scale cd/.style={every label/.append style={scale=#1},
    cells={nodes={scale=#1}}}}
\usepackage{quiver}
\usepackage{tipa}
\usepackage{float}
\usepackage[pdf]{pstricks}
\usepackage{pgf}
\usepackage{scalerel}
\usepackage{algpseudocode}
\usepackage{graphbox}
\usepackage{graphicx}
\usepackage{multirow}
\usepackage{thm-restate}
\usepackage{thmtools}
\usepackage{mathtools}
\usepackage{booktabs}
\usepackage{enumitem}
\usepackage{algpseudocode}
\usepackage{algorithm}

\newtheorem{theorem}{Theorem}[section]

\newtheorem{lemma}[theorem]{Lemma}
\newtheorem{prop}[theorem]{Proposition}
\newtheorem{definition}[theorem]{Definition}

\hypersetup{
    colorlinks=true,
    linkcolor=magenta,
    filecolor=magenta,      
    urlcolor=magenta,
    citecolor = magenta,
    pdftitle={TBD},
    }
    
\newcommand{\R}{\mathbb{R}}

\newcommand{\Z}{{\mathbb Z}}

\newcommand{\PP}{\mathcal{P}}

\newcommand{\cA}{\mathcal{A}}

\makeatletter
\newcommand{\colim@}[2]{%
  \vtop{\m@th\ialign{##\cr
    \hfil$#1\operator@font colim$\hfil\cr
    \noalign{\nointerlineskip\kern1.5\ex@}#2\cr
    \noalign{\nointerlineskip\kern-\ex@}\cr}}%
}
\newcommand{\colim}{%
  \mathop{\mathpalette\colim@{\rightarrowfill@\textstyle}}\nmlimits@
}
\makeatother

\renewcommand{\phi}{\varphi}
\newcommand{\inv}{^{-1}}

\DeclareMathOperator{\Lk}{\mathrm{Lk}}
\DeclareMathOperator{\St}{\mathrm{St}}

\DeclareMathOperator{\lSt}{\mathrm{St}^{\downarrow}}

\let\Im\relax 
\DeclareMathOperator{\Im}{Im}
\DeclareMathOperator{\Ker}{Ker}

\newcommand{\colex}{<^{\mathrm{co}}}

\newcommand{\mycomment}[1]{}

\DeclareMathOperator*{\argmax}{arg\,max}
\DeclareMathOperator*{\argmin}{arg\,min}

\begin{document}

\title{Updating a Discrete Morse Vector Field for a Lower Star Filtration Vineyard}
\author[1]{\fnm{Kevin} \sur{Woytowich}}\email{woytow1@msu.edu;}

\author[2]{\fnm{Nkechi} \sur{Nnadi}}\email{nnadinke@msu.edu}

\author*[1,2]{\fnm{Elizabeth} \sur{Munch}}\email{muncheli@msu.edu}

\affil[1]{\orgdiv{Department of Mathematics}, \orgname{Michigan State University}, \orgaddress{\city{East Lansing}, \postcode{48824}, \state{Michigan}, \country{USA}}}

\affil[2]{\orgdiv{Department of Computational Mathematics, Science, and Engineering}, \orgname{Michigan State University}, \orgaddress{\city{East Lansing}, \postcode{48824}, \state{Michigan}, \country{USA}}}

\date{}

\abstract{In this paper, we provide a construction of an acyclic discrete vector field that is compatible with a total order associated with a lower star filtration on a simplicial
complex, called the colex vector field. We show that the colex vector field induces a filtered acyclic vector field, whose resulting Morse complex computes the persistent homology of the lower star filtration on the underlying simplicial complex. We show that the colex vector field can be recomputed quickly when the vertex function that induces the lower star filtration is modified via order-adjacent vertex swaps. We provide a framework for storing and computing the number of paths between cells in the simplicial complex, as well as a method to update these values quickly when the colex vector field changes. Finally, we provide publicly available proof-of-concept code for the ideas shown. When applying it to the Persistent Homology Transform, we show that its runtime is
comparable to a more standard matrix reduction approach.}

\keywords{persistent homology, persistence, homology, vineyards, PHT, vector field, discrete Morse theory}
\pacs[MSC Classification]{55N31, 68T09, 55U10, 55U15, 62R40}

\maketitle

\section{Introduction}
Persistent homology \cite{boundary, Zomorodian2005}, as the flagship tool in topological data analysis (TDA), involves tracking the appearance, sustenance, merging, and disappearance of homology features across an ordered sequence of resolutions of a topological space known as a \emph{filtration}. 
This summary is recorded as a collection of points in $\R^2$ tracking the appearance and disappearance of features known as a persistence diagram. One of the defining strengths of persistent homology is the stability theorem for persistence diagrams \cite{CohenSteiner2007Stability, Bauer_stability, bubenik2015metrics,chazal2009Stability}, which shows that the bottleneck distance between persistence diagrams is bounded by the magnitude of the perturbation of the filtering function. 
Consequently, persistence provides a mathematically robust summary of topological structure that is resilient to noise and small deformations. 
Over the past two decades, persistent homology (and TDA in general) has found applications across a remarkably diverse range of disciplines, such as describing shape in dynamical systems \cite{TymochkoMunch}, atmospheric science \cite{TYMOCHKO2020137}, electoral redistricting and polling access \cite{Moon2022, Hickok_polling}, among many other applications. 
Interested readers can find a curated collection of practical uses in the DONUT database \cite{DONUT}.
The foundational work of \cite{Zomorodian2005} expounds on the algebraic framework underlying persistent homology, and \cite{simplification} introduces the standard persistence reduction algorithm for computing persistence over the field $\Z_2$. 
This algorithm is based on reducing the boundary matrix of a filtered simplicial complex and has worst-case time complexity $O(N^3)$, where $N$ denotes the number of simplices in the filtration.

Several efforts have been made to improve the standard matrix reduction algorithm, yielding significant gains in practical performance even when theoretical asymptotic complexity remains unchanged. 
The clear-and-compress algorithm \cite{clear_compress} partitions the boundary matrix into local chunks, compresses unresolved columns, and exploits parallelism to substantially reduce computation in practice, despite preserving the worst-case complexity of the standard algorithm. 
In \cite{sparse}, the authors introduced new reduction strategies based on column swapping and retrospective reductions that preserve sparsity and substantially improve practical performance. Building on the fact that ordinary persistence runs in time $O(M(N))$ \cite{MatrixMult}, where $M(N)$ is the time it takes to multiply two $N\times N$ matrices,  \cite{MorozovZigzagMatrixMult} presents an algorithm that computes zigzag persistence over a finite field in time $O(M(N) + N^2\log^2 N)$. 
As far as we know, the first rigorous average-case complexity analysis for matrix reduction is presented in \cite{giunti2025}, thereby complementing work on practical algorithmic improvements. 
Among the most influential advances in persistent homology software is Ripser \cite{Bauer2021}, which achieves remarkable computational efficiency through a combination of cohomological methods and optimized matrix reduction techniques. 
This illustrates the ongoing effort to accelerate persistence computations for the special case of the Rips filtration of a point cloud. 

In practice, however, data is often not static and naturally evolves with respect to an external parameter; e.g.~time-varying measurements, dynamical systems, video streams, biological processes, and sequential point clouds arising from moving objects. 
Such settings naturally give rise to families of filtrations motivating the study of how a topological summary such as the persistence diagram evolves across the parameter space. 
Different choices of parameterization give rise to different families of topological summaries. 
When the parameter is one-dimensional, that is, indexed by $\R$, the continuous evolution of persistence diagrams is known as a persistence vineyard \cite{dmitriy}.
The persistent homology transform (PHT) provides an example of a family of persistence diagrams parameterized over the sphere $\mathbb{S}^{d-1}$ \cite{pht}.
Parameterizations over arbitrary base spaces form persistence diagram bundles \cite{abbyhickok1}, and if more simplicial information is retained than only the points in the persistence diagrams, these induce canopies with a natural topological structure \cite{GiuntiMunch2026}.
Other parameterized topological summaries include the Euler characteristic transform (ECT) \cite{Munch02012025}, obtained by indexing Euler characteristic curves induced by height functions over directions, and CROCKER plots, which summarize families of Betti curves \cite{crocker_plots2015, crocker_plots2022}. 
These provide descriptors of shape that are, in some cases, stable under certain conditions \cite{Marsh2026, SELECT_Stability}, like the PHT \cite{Curry2022}, although they generally capture less topological information than persistence-based summaries.

Persistence pairs record the birth and death of topological features across a filtration, and are represented as points in the persistence diagram. 
By the stability theorem, the points on the diagram represented by persistence pairs evolve continuously with the parameter, tracing out a collection of trajectories which are referred to as \emph{vines}\cite{dmitriy, Morozovthesis}. 
Vineyards have been applied to effectively describe dynamic changes in musical compositions \cite{Bergomi2020}, protein folding \cite{dmitriy}, dynamical system regimes \cite {Algar_etal}, polymer chains \cite{Bertoglio2025}, and functional connectivity of the brain as seen in EEG data \cite{yoon_etal} and fMRI paradigms \cite{10.1371/journal.pone.0255859, Abdallah_etal}. 
These collections of persistent homology summaries provide a robust way to discriminate between shapes. 
Recent work has expanded the mathematical foundations of persistence vineyards beyond their computational origins \cite{turner2023}. 
Chambers et al.~\cite{braiding_vineyards} establish a connection between vineyards and knot theory by showing that closed vineyards can realize arbitrary links, highlighting the expressive topological complexity of vineyards.

A natural concern that arises when examining families of topological summaries rather than individual ones is the computational cost. 
The continuity guaranteed by the stability theorem suggests that successive diagrams in a smoothly varying filtration should differ only locally, motivating algorithms that update existing persistence computations rather than recomputing them from scratch. 
\cite{dmitriy} provides procedures for updating persistence pairs after different cases of a simplex order change in the filtration. 
In \cite{giunti2025pruning}, the authors develop an algorithm for updating persistence barcodes and representative cycles following simplex removals. 
Their \emph{simplicial removal update procedure} incrementally updates the reduced boundary matrix, avoiding recomputation from scratch while minimizing the number of matrix column operations.
More work is done in \cite{Tamal_speedup, tamal_paper} to develop algorithms for tracking representative generating cycles with temporal coherence, allowing essential homology classes to be followed throughout a dynamic filtration. 
More recently, \cite{jose_paper} proposed \emph{move schedules}, which replace long sequences of adjacent transpositions with more efficient coarse moves. 
This reduces the overall cost of dynamic persistence computations by decreasing the number of filtration updates that must be processed. 

Our work addresses the complementary problem of how to efficiently update the persistence decomposition following a local update. 
We answer this question by exploiting discrete Morse theory to minimize the computational effort required after changes in filtration.
Researchers have long known that discrete Morse theory is an effective tool for accelerating persistent homology computations \cite{Gunther2011, Gunther2012, Kannan2019, Bauer2011}. 
In \cite{vidit_paper}, the authors introduce a method of using combinatorial discrete Morse theory (DMT) that can dramatically accelerate persistent homology computations by reducing the size of the complex before persistence is computed, while retaining the persistent homology intact. 
Yang et al.~\cite{Yang2025} accelerate the initial persistent homology computation through Morse theory-inspired matching and matrix reductions. 
Morse theory has been extended to zigzag persistence \cite{Maria2019}, as well as networks and hyper-networks \cite{Saucan2021}. 
Not only that, \cite{tripart} demonstrates why simplex orderings and matrix reductions carry rich algebraic information. 
They show that for any ordered cell complex, matrix reduction induces a canonical tri-partition of cells into tree, co-tree, and homology generators, yielding canonical bases for cycle and boundary groups. 

We build upon the foregoing ideas in our work to employ discrete Morse theory to accelerate dynamic updates of persistence pairs under evolving filtrations. 
We specifically consider \emph{lower-star filtrations}, given by a real-valued function on the vertices of a simplicial complex, with its higher-dimensional simplices entering the filtration only when the participating vertex with the highest function value has been included. 
We choose to impose a colexicographic ordering on all the simplices in the filtration, so that the ordering changes when the vertex function changes. 
Using this setup, we characterize when changes in simplex order correspond to local changes in the Morse matching, and how this updates the persistence. 
We demonstrate that only a small subset of critical cells need to be reconsidered after each update, and that persistence pairs obtained via our discrete Morse update procedure coincide with those produced by the standard matrix reduction algorithm following local filtration changes. 
To support the theory, we provide a proof-of-concept implementation to demonstrate its practicality. 
Preliminary experimental results show that our implementation consistently outperforms recomputation via the standard reduction algorithm on a collection of benchmark examples, providing initial evidence that the proposed framework can yield computational savings. 

\section{Background}

In this section, we give the necessary background on posets, persistent homology, lower star filtrations, and discrete Morse theory to be able to state our results. 

\subsection{Basic Notions}

We start by using an ordering on a set to give an ordering on words of the set, which we will use to create an ordering of simplices from a vertex ordering.

\begin{definition}
\label{defn:totalorder}
    A totally ordered set $(\cA,\leq)$ is a set such that for all $x, y \in \cA$, either $x\leq y$ or $y \leq x$. 
    For subsets $A, B \subseteq \cA$, we abuse notation to write $A < B$ if $a<b$ for all $a \in A$ and $b \in B$. 
\end{definition}

\begin{definition}
\label{def:colex}
Assume we are given a finite, totally ordered set $(\cA,\leq)$. 
The \emph{words} of $A$ are finite sequences of elements of $\cA$.
\emph{Colexicographic (colex) order} on the words of $\cA$ is a total order, $\colex$, given as follows for two words $a = a_0a_1\cdots a_k $ and ${b = b_0b_1\cdots b_{\ell}}$. 
Let $I =\{0, 1, \ldots, \min\{k,l\}\}$ and $J=\min\{i\in I: a_{k-i}\neq b_{\ell-i}\}$. We say that $a \colex b$ if $a_{k-J} <b_{\ell-J}$ for the first index $i \in \{0,\cdots,J\}$  where $a_{k-i} \neq b_{\ell-i}$.
Moreover, if $a_{k-i} = b_{\ell-i}$ for all $i \in I$ and $k<\ell$, then $a \colex b$. 
\end{definition}

In the rest of this paper, we shall refer to this ordering as colex order. Note that this definition is set up so that moving in from the end of the two words, we look for the first entry that differs to decide on the ordering; for example $hands \colex shards$ using alphabetical order since $n$ is before $r$ in the alphabet.
Because we will often be interested in just the ending of a word, we write $w = \cdots \alpha$ if the word $w$ ends with the word $\alpha$.

\begin{definition}
    A simplicial complex, $K$, is a (finite) set of subsets of a vertex set $V$ that is closed under the subset relation. 
    That is, if $\sigma \subseteq \tau$ for some $\tau \in K$, then $\sigma \in K$.
    If $\sigma \subseteq \tau$, we refer to $\sigma$ as a \emph{face} of $\tau$. 
    The face relation is denoted $\sigma \leq \tau$ with no subscripts;  if we want to emphasize that $\sigma \neq \tau$, we use $\sigma < \tau$.
\end{definition}

Throughout, we assume a colex order on the simplices of $K$, inherited by viewing simplices as words of $V$ with an injective function $f \colon V \rightarrow \R$ inducing the totally ordered set $V$ for Definition \ref{def:colex}.
Moreover, given a function $f \colon V \rightarrow \R$, we extend it to all of $K$ by defining $f(\sigma) = \max\{f(v) \mid v \in \sigma\}$. 
We say $f \colon K \rightarrow \R$ is \textit{induced by a vertex function}.
Note that for such a function, $\sigma \leq \tau$ (the face relation) implies $f(\sigma) \leq f(\tau)$, and in particular, the sublevel sets $f^{-1}((-\infty, r])$ are subcomplexes of $K$ for each $r\in \R$.

\begin{definition}
\label{defn:compatibleOrdering}
    Given a function $f:K \to \R$, a \emph{compatible ordering} $\prec$ of the simplices of $K$ is a total order such that if 
    \textit{(i)} $\sigma < \tau$ (face relation) or 
    \textit{(ii)} $f(\sigma) < f(\tau)$, then $\sigma \prec \tau$.
\end{definition}

For example, when $f$ is induced by a vertex function, the corresponding colex order is a compatible ordering of the simplices of $K$.

\begin{definition}
    \label{def:starsAndLinks}
    For any vertex $v\in V$,
    
    \begin{itemize}
        \item The \emph{star} of $v$ is the set $St(v) = \{\sigma  \in K \mid v \in \sigma \}$.  
        \item The \emph{closed star} of $v$ is the set $\overline \St(v) \coloneq \{ \sigma \in K \mid \sigma \leq \tau \in \St(v)\}$. Equivalently, this is the smallest subcomplex of $K$ that contains $St(v)$.
        \item The \emph{link} of $v$ is the set $Lk(v) = \{ \sigma  \in K: v\notin \sigma, \: \{v\}\cup \sigma \in K\}$.
        \item The \emph{lower star} of $v$ is $\lSt(v) = \{\sigma \in \St(v) \mid f(\sigma) \leq f(v) \}$. Equivalently, this can be written as $\lSt(v)= \{\sigma \in \St(v) \mid f(w) \leq f(v), \; \forall w \in \sigma  \}$. 
    \end{itemize}
\end{definition}

We emphasize that the star and link of a vertex are generally not simplicial complexes, since they do not necessarily satisfy the closure property. Consequently, these constructions should be regarded simply as collections of simplices rather than subcomplexes.


\begin{lemma}
\label{lem:closedStarPartition}
For any vertex $v \in V$, 
\begin{enumerate}[label=(\alph*)]
    \item $\sigma \in \overline \St(v)$ and $v \in \sigma$ if and only if $\sigma \in \St(v)$, 
    \item $\sigma \in \overline \St(v)$ and $v \notin \sigma$ if and only if $\sigma \in \Lk(v)$, 
    \item $\overline \St(v) = \St(v) \sqcup \Lk(v)$.
\end{enumerate}
\end{lemma}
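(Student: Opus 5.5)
We prove the three parts in order, unwinding the definitions of $\St(v)$, $\overline\St(v)$ and $\Lk(v)$ from Definition~\ref{def:starsAndLinks} and using only that $K$ is closed under taking subsets.

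For (a), the forward direction is immediate: $\sigma \in \overline\St(v)$ gives $\sigma \in K$, and together with $v \in \sigma$ this is exactly the definition of $\sigma \in \St(v)$. Conversely, if $\sigma \in \St(v)$ then $v \in \sigma$, and taking $\tau = \sigma$ in the definition of the closed star gives $\sigma \le \sigma \in \St(v)$, so $\sigma \in \overline\St(v)$.

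For (b), suppose first that $\sigma \in \overline\St(v)$ and $v \notin \sigma$. Then $\sigma \subseteq \tau$ for some $\tau \in K$ with $v \in \tau$. Hence $\sigma \cup \{v\} \subseteq \tau$, and since $K$ is closed under subsets, $\sigma \cup \{v\} \in K$. Thus $\sigma \in \Lk(v)$. Conversely, if $\sigma \in \Lk(v)$, put $\tau = \sigma \cup \{v\}$. Then $\tau \in K$ and $v \in \tau$, so $\tau \in \St(v)$. Also $\sigma \le \tau$, so $\sigma \in \overline\St(v)$, and $v \notin \sigma$ holds by the definition of the link.

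For (c), every $\sigma \in \overline\St(v)$ satisfies exactly one of $v \in \sigma$ or $v \notin \sigma$. By (a) and (b), this places $\sigma$ in exactly one of $\St(v)$ or $\Lk(v)$. Both $\St(v)$ and $\Lk(v)$ are contained in $\overline\St(v)$, by the reverse directions of (a) and (b). They are disjoint because membership in $\St(v)$ requires $v \in \sigma$ while membership in $\Lk(v)$ requires $v \notin \sigma$. Hence $\overline\St(v) = \St(v) \sqcup \Lk(v)$.

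The only step that uses more than the definitions is the forward direction of (b), which needs the closure of $K$ under subsets to conclude $\sigma \cup \{v\} \in K$.
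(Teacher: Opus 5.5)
Your proposal is correct and follows essentially the same argument as the paper. Parts (a) and (b) are proved by unwinding the definitions, using $\tau=\sigma$ and $\tau=\sigma\cup\{v\}$ as witnesses for membership in $\overline\St(v)$ and closure under faces for the forward direction of (b), and part (c) follows by splitting $\overline\St(v)$ according to whether $v\in\sigma$; you are only slightly more explicit than the paper about containment and disjointness in (c).
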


\begin{proof}$ $

\begin{enumerate}[label=(\alph*)]
    \item If $v \in \sigma$, then $\sigma \in \St(v)$ by definition.
    Conversely, if $\sigma \in \St(v)$, then $v \in \sigma$ by definition; moreover, $\sigma \leq \sigma \in \St(v)$, so $\sigma \in \overline \St(v)$ trivially.

    \item If $\sigma \in \overline \St(v)$, then $\sigma \leq \tau \in \St(v)$; 
    by definition, $v \in \tau$, and therefore it has the face $\{v\} \cup \sigma \in K$.
    However, since $v \notin \sigma$, this implies that $\sigma \in \Lk(v)$.
    Conversely, $\sigma \in \Lk(v)$ directly implies that $v \notin \sigma$, and $\sigma \leq \{v\} \cup \sigma \in \St(v)$, so $\sigma \in \overline \St(v)$.

    \item By the previous two results, partitioning $\overline \St(v)$ into $\{\sigma \mid v \in \sigma \}$ and $\{\sigma \mid v \notin \sigma\}$ is the exact same as partitioning it into $\St(v)$ and $\Lk(v)$. \qedhere
\end{enumerate}
\end{proof}

\begin{definition}
    A filtration of a simplicial complex $K$ indexed by a finite set ${I = \{i_0 \leq i_1 \leq \dots \leq i_n \} \subset \R}$ is a collection of subcomplexes $\mathcal{K} = \{ K_i \mid i \in I\}$ with $K_i \subseteq K$ for each $i$ such that $i \leq j$ implies $K_i \subseteq K_j$.
    Additionally, we require $K_{i_0} = \emptyset$ and $K_{i_n} = K$.
    When the exact values of $I$ are irrelevant, a filtration will be denoted \[\{0 = K_{0} \subseteq K_{1} \subseteq  \ldots \subseteq K_{n} = K\}.\]
    Each $K_i \in \mathcal{K}$ may be referred to as a frame.
\end{definition}

A common construction is a sublevelset filtration, where given a function ${f\colon K \to \R}$, $K_i = f\inv(-\infty,a_i]$ for a collection $a_0 <a_1<\cdots a_n$. 
To ensure these sublevelsets are indeed simplicial complexes, we assume the function $f$ is \textit{monotone}, meaning that if $\sigma \leq \tau$, then $f(\sigma) \leq f(\tau)$. 
The \emph{lower star filtration} is the sublevelset filtration obtained from a function $f\colon K \to \R$ induced by a vertex function, indexed by $f(V)$.
In a lower star filtration, each vertex $v$ enters the filtration along with its lower star. 
This means, in particular, that the simplices entering the filtration at that function value each have $v$ as the participating vertex with the highest function value. 
This also means that any simplices that have vertices $w_i$ with $f(w_i) > f(v)$ do not get added at this time. 
In this context, where sublevel sets are induced by a function $f$, we refer to $f$ as a \emph{filter function}.


\subsection{Persistent homology}
\label{sec:PersistentHomology}
Here we review the relevant background on persistent homology for our special case of a lower star filtrationand direct the interested reader to \cite{oudot, DeyWang_text, edelsbrunner_text} for details. Let $K$ be a simplicial complex and $f:K \to \R$ be induced by a vertex function. Fix a compatible ordering and relabel the simplices $\sigma_1, \sigma_2, \ldots, \sigma_n$, such that $i<j$ implies $f(\sigma_i) \leq f(\sigma_j)$. The \emph{$p$-chain group} of $K$, $C_p(K)$, is the group whose elements are formal sums of $p$-simplices with coefficients in some field, which in our case is $\Z_2$. The map $\partial_p: C_p \rightarrow C_{p-1}$ defined as 
\begin{equation} \partial_p(\sigma) = \sum\limits_{i=0}^p\{v_0, v_1, \ldots, \hat{v}_i, \ldots, v_p\},
\end{equation} for a $p$-simplex $\sigma = \{v_0, v_1, \ldots, v_p\}$, where $\hat{v_i}$ denotes the omission of that vertex, is known as the \emph{$p^{th}$ boundary map}. This map is a homomorphism and satisfies the relation $\partial_p \circ \partial_{p+1} =0$. So we can obtain a sequence of chain groups with boundary homomorphisms $\partial_p$ between them, known as a \emph{chain complex}:
$$0\rightarrow \ldots \xrightarrow[]{\partial_{p+2}} C_{p+1} \xrightarrow[]{\partial_{p+1}} C_{p} \xrightarrow[]{\partial_{p}} C_{p-1} \xrightarrow[]{} \ldots \xrightarrow[]{\partial_1} C_0\xrightarrow[]{\partial_0} 0.$$

A $p$-chain whose boundary is empty is a \emph{cycle}. A $p$-chain which is the boundary of any $(p+1)$-chain is a \emph{boundary}. The group of $p$-cycles is the kernel of the boundary homomorphism $\partial_p$, and similarly, the group of $p$-boundaries is the image of $\partial_{p+1}$. We denote these groups as $Z_p(K)$ and $B_p(K)$, respectively.

\begin{definition}\label{defn:Homology_group}
    The $p^{th}$ homology group of $K$, $H_p(K)$ is defined as the quotient group $ \dfrac{Z_p(K)}{ B_p(K)} = \dfrac{\Ker \partial_p}{\Im \partial_{p+1}}$, and its rank is called the \emph{$p^{th}$ Betti number}.
\end{definition}

We then consider this with respect to the lower star filtration ${\{0 = K_0 \subseteq K_1 \subseteq \ldots \subseteq K_n =K \}}$ of the simplicial complex $K$.  The inclusions $K_i \subseteq K_j$ induce linear transformations $F_p^{i\leq j}: H_p(K_i) \rightarrow H_p(K_j)$ between the respective $p^{th}$ homology groups. Thus, for each homology dimension $p$,  we obtain a sequence of homology groups 
\begin{equation} 0 \rightarrow H_p(K_1) \rightarrow H_p(K_2) \rightarrow \ldots \rightarrow H_p(K_n), \end{equation} known as a \emph{persistence module}. 

\begin{definition}
    We define the \emph{$p^{th}$ persistent homology groups} to be the images of induced homomorphisms $H_p^{i\leq j} = \Im \; F_p^{i\leq j}$. The associated persistent \emph{Betti number} is the rank of the image, written as $\beta^p_{i\leq j} = \text{rank } \: H_p^{i\leq j}$. 
\end{definition}
The collection of Betti numbers may be visualized on a \emph{persistence diagram}, $\mathcal{D}^p(f)$, which is a multiset of points in the extended Euclidean plane $(\R \cup \{\infty\})^2$ where each element $(c_i,c_j) $ has multiplicity $\mu^p_{i\leq j}$, with $$\mu^p_{i\leq j} = (\beta^p_{i\leq j-1} - \beta^p_{i\leq j}) - (\beta^p_{i-1\leq j-1} - \beta^p_{i-1\leq j}).$$
When the dimension $p$ is clear from the context, we will simply refer to $\mathcal{D}(f)$ as the associated persistence diagram. We shall denote the space of persistence diagrams as $\mathrm{DGM}$, and describe a metric on this space to allow us to define a topology and thus have a well-defined notion of continuity. 

\begin{definition}[Bottleneck metric]
\label{defn:BottleneckDistance}
Fix $q \in [1,\infty]$, and let $\Delta = \{(x,x)\in \R^2\}$ denote the diagonal. Given a pair of persistence diagrams $X$ and $Y$, considered as multisets of points in the interior upper half space, $\left\{(x,y) \in \R \times \overline{\R} \mid x<y \right\}$, a \emph{partial matching} is a bijection
$\phi: X' \to Y'$ for subsets $X' \subseteq X$ and $Y' \subseteq Y$. The cost of a partial matching is given by
$$ c(\phi) =\max\left\{
\sup_{x\in X'} \lVert x-\phi(x)\rVert_q,\,
\sup_{z\in (X\setminus X')\cup(Y\setminus Y')}
\lVert z-\Delta\rVert_q
\right\}.
$$
The \emph{bottleneck metric} between $X$ and $Y$ is
\[ W_{\infty}(X,Y) = \inf_{\phi} c(\phi), \]
where the infimum is taken over all partial matchings of $X$ and $Y$.
\end{definition}

The stability theorem in \cite{CohenSteiner2007Stability, Chazal2016} establishes the robustness of persistence diagrams under small perturbations of the filter function $f$ using the bottleneck metric, making them a reliable descriptor of topological features in filtered spaces.

\subsection{Matrix reduction}
We briefly review the standard matrix reduction algorithm following \cite{boundary} for computing persistent homology over $\Z_2$, as it serves as the baseline against which we measure the efficiency of our proposed method. Consider the incidence matrix $D$, of simplices in $K$, which is a binary matrix of size $|K|\times |K|$ such that 
$$ D[i,j] = \begin{cases}
        1 & \text{if } \sigma_i \subseteq \sigma_j\\
        0 & \text{otherwise}.
    \end{cases}$$
The following definition arises from the Algorithm \ref{alg:MatrixReduction} introduced in \cite{boundary}, which proceeds by column-reducing $D$ into another binary matrix, $R$. 
Define the function ${low_R \colon \{1,\ldots, |K|\} \rightarrow \{1,\ldots, |K|\}}$ such that $low_R(j)$ is the row index of the lowest $1$ in column $j$ of $R$, and is undefined if the column consists entirely of zeros. 

\begin{definition}
    The matrix $R$ is said to be \emph{reduced} if $low_R(j)\neq low_R(j')$ whenever $j\neq j'$ and columns $j$ and $j'$ are non zero columns. That is, no two columns share the same row index for their lowest $1$ entries.
\end{definition}

\begin{algorithm}[h!]
\caption{Matrix reduction algorithm}
\label{alg:MatrixReduction}
\begin{algorithmic}[1]
    \State $R= D$;
    \For{$j = 1$ to $n$}
        \While{there exists $j' < j$ with $low(j') = low(j)$ and $low_R(j) \neq -1$}
            \State add column $j'$ to column $j$
        \EndWhile
        \If{$low_R(j) \neq -1$}
            \State $i:=low_R(j)$; generate pair $(\sigma_i, \sigma_j)$
        \EndIf
    \EndFor
\end{algorithmic}
\end{algorithm}

The non-zero columns of a reduced matrix are linearly independent over $\Z_2$. The reduced matrix is computed as in the pseudocode of Algorithm \ref{alg:MatrixReduction}.
This algorithm has cubic time complexity with respect to the number of simplices, $n$. Essentially, the matrix reduction algorithm decomposes $D$ as a product $RU$, where $R$ is the reduced matrix and $U$ is an invertible upper-triangular matrix.

\begin{definition}
Given a reduced matrix $R$ and an invertible upper triangular matrix $U$ such that $D=RU$, we define $\sigma_i$ to be a \emph{birth simplex} if there exists a column $j$ such that $i=low_R(j)$ or, equivalently, if the reduced column $R[i]$ is zero. The simplex $\sigma_j$ such that $i=low_R(j)$, or equivalently, whose reduced column $R[j]$ is nonzero, is referred to as a \emph{death simplex}.
A pair $(\sigma_i, \sigma_j)$ of these simplices is referred to as a \emph{birth-death pair} or \emph{persistence pair}.
\end{definition}

By the Pairing Uniqueness Lemma in \cite{dmitriy}, the birth-death simplex pairs are unique, regardless of the particular $RU$ decomposition. We also note that the persistence diagram is uniquely determined by the underlying persistence module. Therefore, any compatible ordering as in Definition \ref{defn:compatibleOrdering} gives the same persistence diagram, even if the pairing is different. This invariance of the resulting persistence diagram rests on the fact that the multiplicities can be computed using only the ranks of images of the inclusion-induced linear transformations, completely independent of any basis or matrix algorithm.

\subsection{Parameterized persistence}
We now focus on the case of updating persistence pairs induced by a continuously evolving filter function on a fixed simplicial complex $K$, following \cite{abbyhickok1, GiuntiMunch2026}.

\begin{definition}
A \emph{parameterized filter function} is a function $f : K \times B \to \mathbb{R}$, where $B$ is a topological space, the map
\[ \begin{matrix}
    f_p \colon & K &\to &\mathbb{R}, \qquad\\
&\sigma& \mapsto &f(p,\sigma)
\end{matrix} \] is monotone for every $p \in B$, and $f_\sigma \colon B \to \mathbb{R};\; p \mapsto f(p,\sigma)$ is continuous for every $\sigma \in K$.
\end{definition}

The monotone assumption means that each fiber of $f$ induces a filtration, which in turn yields a persistence module, and thus a persistence diagram.
The result is the following map. 

\begin{definition}
Given a fibered filtration function
$f : K \times B \to \mathbb{R}$, 
the \emph{induced persistence map} is
\[
\begin{matrix}
G \colon & B &\longrightarrow &\mathrm{DGM} \\
      & p &\longmapsto &\mathcal{D}(f_p).
\end{matrix}
\]
\end{definition}

\begin{theorem}
For a fibered filtration function $f: K \times B \to \mathbb{R}$, the induced persistence map $G$ is uniquely defined and is continuous in the bottleneck metric.
\end{theorem}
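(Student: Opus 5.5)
The argument has two parts: well-definedness of $G$ comes from the invariance discussion in the matrix reduction subsection, and continuity comes from the stability theorem applied to the family of filter functions. We take the stability theorem of \cite{CohenSteiner2007Stability, Chazal2016} in its simplicial form: for two monotone functions $g,h\colon K\to\R$ on the same finite complex,
\[
W_\infty(\mathcal{D}(g),\mathcal{D}(h)) \le \|g-h\|_\infty = \max_{\sigma\in K}|g(\sigma)-h(\sigma)|.
\]

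\emph{Well-definedness.} Fix $p\in B$. Since $f_p$ is monotone, the sublevel sets $f_p^{-1}((-\infty,r])$ are subcomplexes, so $f_p$ determines a filtration and hence a persistence module. This holds no matter how ties are broken or which compatible ordering (Definition~\ref{defn:compatibleOrdering}) is used to list the simplices. As remarked after the Pairing Uniqueness Lemma, the multiplicities $\mu^q_{i\le j}$ are defined purely from the ranks of the maps $F_q^{i\le j}$ of this module. These ranks do not depend on any basis, on the choice of compatible ordering, or on the particular $RU$ decomposition. So $\mathcal{D}(f_p)$ depends only on $f_p$, and $G(p)=\mathcal{D}(f_p)$ is a well-defined element of $\mathrm{DGM}$.

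\emph{Continuity.} Fix $p_0\in B$ and $\varepsilon>0$. Each $f_\sigma\colon B\to\R$ is continuous, so for each $\sigma\in K$ there is an open neighborhood $U_\sigma$ of $p_0$ on which $|f_\sigma(p)-f_\sigma(p_0)|<\varepsilon$. Because $K$ is finite, $U=\bigcap_{\sigma\in K}U_\sigma$ is again an open neighborhood of $p_0$. For every $p\in U$ this gives $\|f_p-f_{p_0}\|_\infty<\varepsilon$, and the stability theorem then yields $W_\infty(G(p),G(p_0))<\varepsilon$. Hence $G$ is continuous at every point of $B$.

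The main obstacle is making sure the stability theorem applies in this exact setting. We need it for arbitrary monotone (not necessarily vertex-induced) functions on a finite simplicial complex, including essential classes with infinite death coordinate, and measured with the $\ell_q$-based bottleneck cost of Definition~\ref{defn:BottleneckDistance}. I would invoke the algebraic stability (isometry) theorem of \cite{Chazal2016}. Its hypotheses hold here: the sublevel-set persistence modules of $f_p$ and $f_{p_0}$ are $\|f_p-f_{p_0}\|_\infty$-interleaved via the inclusions $f_p^{-1}((-\infty,r])\subseteq f_{p_0}^{-1}((-\infty,r+\delta])$ with $\delta=\|f_p-f_{p_0}\|_\infty$. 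This gives the bound for $q=\infty$. For finite $q$ the cost changes by at most a constant factor ($\|\cdot\|_q\le 2^{1/q}\|\cdot\|_\infty$ in $\R^2$, together with the corresponding comparison for distances to $\Delta$), and a constant factor does not affect continuity. Points at infinity are matched only to points at infinity, and their finite coordinates are controlled in the same way.
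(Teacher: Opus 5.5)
Your proposal is correct and takes the same route as the paper: uniqueness holds because $\mathcal{D}(f_p)$ depends only on the persistence module of the sublevel-set filtration of $f_p$, and continuity follows from the stability theorem. You also fill in details the paper's two-line proof leaves implicit: using the finiteness of $K$ to get $\|f_p - f_{p_0}\|_\infty < \varepsilon$ on a neighborhood of $p_0$, the $\delta$-interleaving of the sublevel-set modules, and the constant-factor comparison between the $\ell_q$ and $\ell_\infty$ bottleneck costs.
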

\begin{proof}
    Uniqueness follows from the fact that the persistence diagram $\mathcal{D}(f_p)$ is uniquely determined by the values of $f_p$ for each $p\in B$. Continuity is given by the stability theorem as in \cite{CohenSteiner2007Stability, Chazal2016}.
\end{proof}

One special case of these parameterized persistence structures is the \emph{vineyard} \cite{dmitriy}, which arises from a $1$-parameter family of filtrations of simplicial complexes, with $B=\R$.
For each parameter value $t \in \R$, persistent homology yields a persistence diagram $\mathcal{D}(f_t)$. The collection $\{\mathcal{D}(f_t)\}_{t\in\mathbb{R}}$
forms a persistence vineyard, in which the birth-death points of the persistence diagrams evolve continuously as the parameter changes. The trajectories traced by these points, called \emph{vines}, record the evolution of individual homological features across the family of filtrations, providing a dynamic representation of topological changes over time or with respect to another continuously varying parameter.  

Another special case arises by considering $B=\mathbb{S}^{d-1}$ when $K\subseteq \R^d$. The \emph{directional transform} is the parameterized filtration function defined by
\[
\begin{matrix}
f \colon  & K \times \mathbb{S}^{d-1}  &\longrightarrow &\mathbb{R}\\
& (\sigma,\omega)& \longmapsto &\max_{v\in\sigma}\langle v,\omega\rangle,
\end{matrix}
\]
where each vertex $v$ is identified with its embedding in $\mathbb{R}^d$, and $\langle \cdot, \cdot \rangle$ is the standard inner product on $\R^d$. \emph{The Persistent Homology Transform (PHT)} of $K$ is then the induced persistence map
\[
\begin{matrix}
\operatorname{PHT}(K) \colon & \mathbb{S}^{d-1}&\longrightarrow &\mathrm{DGM},\\
&\omega&\longmapsto &\mathcal{D}(f_\omega),
\end{matrix}
\]
which assigns to each direction the persistence diagram of the corresponding directional filtration. The PHT, introduced by Turner et al.~\cite{pht}, is among the most prominent applications of parameterized persistence. A fundamental reason for its importance is that, under suitable conditions, the collection of persistence diagrams obtained from all directions uniquely determines the underlying topological structure for a broad class of embedded shapes \cite{pht, Curry2022, Ghrist2018}.

\subsection{Discrete Morse Theory}

This work uses discrete Morse theory as developed by Forman \cite{Forman}, which adapts techniques and results from Morse theory on smooth complexes to cell complexes. 
This will provide results about the homology of a simplicial complex while only needing to compute it for a smaller complex.
Here we follow \cite{Forman, knudson2015morse, vidit_paper, filtered} for the basic theory. 
Throughout, let $K$ be a simplicial complex with vertex set $V$, where $\sigma^{(p)}$ denotes a $p$-dimensional simplex in $K$.

\begin{definition}[{\cite[Definition 6.14]{knudson2015morse}}]
    A \textbf{discrete vector field} $F$ on $K$ is a collection of pairs of simplices in $K$ of the form $\{\alpha^{(p)} < \beta^{(p+1)}\}$, with no simplex in more than one pair.
\end{definition}

The pair $\{\alpha, \beta\}$ can be thought of as drawing an arrow from $\alpha$ to $\beta$, henceforth denoted $\{\alpha \rightarrow \beta\}$. 
In this case, $\alpha$ will be referred to as the \textit{tail} of that arrow, and $\beta$ will be referred to as the \textit{head} of that arrow.
If a simplex $\sigma$ is not part of any arrow of $F$, then we say that $\sigma$ is \textit{critical}.\footnote{We mention that standard practice is to denote a discrete vector field with $V$ rather than $F$. In this paper, $V$ is reserved for the vertex set of $K$.}

\begin{definition}[{\cite[Definition 6.17]{knudson2015morse}}]
\label{def:FPath}
Let $F$ be a discrete vector field on $K$. 
An \textbf{$F$-path} is a sequence of simplices 
\[ \alpha_0^{(p)}, \beta_0^{(p+1)}, \alpha_1^{(p)}, \beta_1^{(p+1)}, \alpha_2^{(p)}, \dots, \beta_r^{(p+1)}, \alpha_{r+1}^{(p)}\]
such that for each $i$, we have $\{\alpha_i, \beta_i\} \in F$, $\beta_i > \alpha_{i+1}$, and $\alpha_i \neq \alpha_{i+1}$.
\end{definition}
To highlight its structure, we will often denote an $F$-path as 
\[\alpha_0 \rightarrow \beta_0 > \alpha_1 \rightarrow \dots >\alpha_{r+1}.\]
A \textit{nontrivial} $F$-path additionally requires $r \geq 0$, and an \textit{closed} $F$-path requires ${\alpha_0 = \alpha_{r+1}}$.
If $F$ has no nontrivial closed $F$-paths, then we say it is \textit{acyclic} (equivalently known as a \textit{gradient}, as in \cite{Forman}).
We also mention for clarity that we often consider $F$-paths that start at $\beta_0^{(p+1)}$ rather than $\alpha_0^{(p)}$; the important behavior is that the sequence alternates between two consecutive dimensions.

The seminal result of Forman \cite{Forman} is that an acyclic discrete vector field $F$ on a simplicial complex $K$ induces a complex with the same homology as $K$. 
We will work in the $\Z/2\Z$ coefficient ring, in order to avoid complications of orientation.
Following \cite{Forman} and \cite{King_mod2}, let $F$ be an acyclic discrete vector field on $K$, and for each $p$, let $C_p(K)$ be the space of chains of $p$-simplices, and $\mathcal{M}_p$ be the span of $p$-simplices that are critical in $F$.
For each $p$, let $\partial_p \colon \mathcal{M}_p \rightarrow \mathcal{M}_{p-1}$ be given by 
\[ \partial_p(\tau) \coloneq \sum_{\sigma \in \mathcal{M}_{p-1}} \Gamma(\tau, \sigma) \sigma, \]
where $\Gamma(\tau, \sigma)$ is the number of $F$-paths from $\tau$ to $\sigma$.
The \textit{discrete Morse complex} of $F$ on $K$, denoted $\mathcal{M}$, is defined as 
\[
0\rightarrow \ldots \xrightarrow[]{\partial_{p+2}} \mathcal{M}_{p+1} \xrightarrow[]{\partial_{p+1}} \mathcal{M}_{p} \xrightarrow[]{\partial_{p}} \mathcal{M}_{p-1} \xrightarrow[]{} \ldots \xrightarrow[]{\partial_1} \mathcal{M}_0\xrightarrow[]{\partial_0} 0.
\]

\begin{theorem}[{\cite[Theorem 5.4]{King_mod2}}]
\label{thm:standardHom}
    If $\mathcal{M}$ is the discrete Morse complex of an acyclic discrete vector field on a simplicial complex $K$, then 
    $$H_p(K; \Z/2\Z) \cong H_p(\mathcal{M}, \partial; \Z/2\Z) \coloneq \dfrac{\Ker(\partial_p; \Z/2\Z)}{\Im(\partial_{p+1}; \Z/2\Z)}.$$
\end{theorem}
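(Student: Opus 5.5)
We will prove the statement, Forman's theorem over $\Z/2\Z$, by collapsing one arrow of $F$ at a time. The induction is on the number of arrows of $F$. Each step will be an elementary algebraic cancellation on a chain complex with a distinguished basis of cells, and it will preserve homology.

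\textbf{Step 1: the setup.} Consider a based chain complex $(C_\bullet, d)$ over $\Z/2\Z$. Let $d(\beta,\alpha)$ denote the coefficient of the basis element $\alpha$ in $d\beta$. Call a pair $\alpha^{(p)},\beta^{(p+1)}$ with $d(\beta,\alpha)=1$ a \emph{reducible pair}. The standard reduction lemma says that $C$ is chain homotopy equivalent to the complex $C'$ with the following description. Its basis is the original basis with $\alpha$ and $\beta$ removed. Its differential is
\[ d'(\tau)=d\tau + d(\tau,\alpha)\, d\beta , \]
with any occurrence of $\alpha$ deleted afterwards, which is automatic since $d(\beta,\alpha)=1$. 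To verify this, split off the contractible subcomplex spanned by $\beta$ and $d\beta$ via an explicit change of basis. Then check directly that $d'^2=0$ and that the quotient map is a quasi-isomorphism. This part is routine.

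\textbf{Step 2: the order of collapse.} Because $F$ is acyclic, there is a linear extension of the relation on arrows given by $F$-paths. In other words, we can order the arrows $\{\alpha_i\to\beta_i\}$ so that whenever an $F$-path goes from $\beta_i$ down to $\alpha_j$ with $i\ne j$, the arrows are ordered compatibly. Choosing this order means that when we process an arrow, its coefficient $d(\beta,\alpha)$ in the current complex is still $1$. The reason is that earlier cancellations only add terms reachable by $F$-paths, and an extra contribution to the coefficient of $\alpha$ in $d\beta$ would require a closed $F$-path through $\alpha$, which acyclicity forbids. Working mod $2$ removes all sign bookkeeping.

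\textbf{Step 3: the resulting differential.} After all arrows are cancelled, the remaining basis is exactly the set of critical cells. Unwinding the iterated formula $d'(\tau)=d\tau+d(\tau,\alpha)\,d\beta$ expresses the final coefficient of a critical $\sigma$ in $\partial\tau$ as a sum over sequences of cancellations. Each such sequence is an alternating chain
\[ \tau>\alpha_0\to\beta_0>\alpha_1\to\cdots>\sigma , \]
which is an $F$-path in the sense of Definition~\ref{def:FPath}. So the final coefficient is the number of $F$-paths mod $2$, namely $\Gamma(\tau,\sigma)$. Composing the homotopy equivalences from Step 1 then gives $H_p(K;\Z/2\Z)\cong H_p(\mathcal{M},\partial)$.

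\textbf{Main obstacle.} The hardest step is Step 3. We must show that the iterated cancellation counts each $F$-path exactly once, with no spurious or missing terms, regardless of the order of collapse. We would handle this by induction on the collapse order. The induction hypothesis is that after cancelling a set $S$ of arrows, the coefficient of a cell $\sigma$ in the current $d\tau$ equals the number mod $2$ of $F$-paths from $\tau$ to $\sigma$ that use only arrows in $S$. Adding a new arrow $\alpha\to\beta$ then contributes exactly the paths that pass through $\alpha\to\beta$, because acyclicity ensures each such path traverses this arrow at most once.
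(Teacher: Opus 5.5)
Your proof is correct. Note that the paper gives no proof of this statement: it is quoted from King et al. The classical argument behind it is Forman's. He builds the discrete gradient flow $\Phi = 1 + \partial V + V\partial$ and shows that its iterates stabilize to a chain map onto the $\Phi$-invariant chains that induces an isomorphism on homology. He then identifies the invariant chains with the span of the critical cells, reading off the boundary as a sum over gradient paths.

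You instead follow the algebraic Morse theory route of iterated Gaussian elimination, as in Sk\"oldberg or J\"ollenbeck--Welker. Each arrow is one elementary cancellation, the homotopy equivalence is built locally and composed, and $\partial^2=0$ on the Morse complex comes for free rather than needing a separate check. Your route is more elementary and works verbatim for any based complex whose matched incidence coefficients are units. Forman's is global and canonical, and comes with the geometric flow picture.

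Your approach also fits this paper closely. The inductive step in your ``main obstacle'' paragraph is $d'(\tau,\sigma) = d(\tau,\sigma) + d(\tau,\alpha)\,d(\beta,\sigma)$, where the correction term counts exactly the concatenations $\Gamma(\tau,\alpha)\Gamma(\beta,\sigma)$. This is Prop.~\ref{prop:countingAddition}, and it is the same column update the paper performs on $P_p$. So your argument shows more: at every stage of the paper's incremental construction, the path matrices restricted to unmatched cells and reduced mod $2$ are differentials of a chain complex homotopy equivalent to $C_*(K;\Z/2\Z)$.

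Two small points.
\begin{enumerate}
\item The linear extension in Step 2 is unnecessary. After cancelling an arbitrary set $S$ of arrows, suppose there were a nontrivial $S$-path $\beta > \alpha_1 \to \beta_1 > \cdots > \alpha$. Prefixing it with $\alpha \to \beta$ gives a nontrivial closed $F$-path. Here $\alpha_1 \neq \alpha$, because $\alpha_1$ is a tail of an arrow in $S$ and $F$ is a matching. Hence $d(\beta,\alpha)$ stays $1$ for every processing order. This mirrors the paper's lemma that the arrows of a subfield can be added naturally in any order.
\item In Step 1, the formula $d'\tau = d\tau + d(\tau,\alpha)\,d\beta$ covers only $(p+1)$-cells. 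For $(p+2)$-cells you must also discard the coefficient of $\beta$. Your quotient by the span of $\beta$ and $d\beta$ does this automatically.
\end{enumerate}
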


To preserve persistent homology, some extra conditions will be required of $F$, as established by \cite{filtered}.
\begin{definition}[{\cite[Definition 4.1]{filtered}}]
    \label{def:filteredAcyclic}
    Given a filtration $\mathcal{F} = \{K^n \mid n = 1, 2, \dots N\}$ of a complex $K$, 
    a \textbf{filtered acyclic vector field} of $\mathcal{F}$ is a collection of acyclic discrete vector fields $F^n$, each on their respective complexes $K^n$, that preserves arrows and critical simplices.
    In particular, for all $\sigma \in K^n$, if $\sigma$ is a tail, a head, or critical in $F^n$, then it is respectively a tail, a head, or critical in $F^{n+1}$ on $K^{n+1}$, for all $n \in \{1, 2, \dots, N-1\}.$
    Additionally, if $\{\alpha \rightarrow \beta\}$ is in $F^{n+1}$, and $\alpha, \beta \in K^n$, then $\{\alpha \rightarrow \beta\}$ is in $F^{n}$.
\end{definition}

Given a filtered acyclic vector field, the acyclic vector field $F^n$ on $K^n$ induces its own Morse complex $\mathcal{M}^n$ for each $n$ given by 
\[
0\rightarrow \ldots \xrightarrow[]{\partial_{p+2}^n} \mathcal{M}^n_{p+1} \xrightarrow[]{\partial_{p+1}^n} \mathcal{M}^n_{p} \xrightarrow[]{\partial_{p}^n} \mathcal{M}^n_{p-1} \xrightarrow[]{} \ldots \xrightarrow[]{\partial_1^n} \mathcal{M}_0^n\xrightarrow[]{\partial^n_0} 0,
\]
where $\mathcal{M}^n_p$ is the span of $p$-simplices that are critical in $F^n$ 
and $\partial_{p}^n$ is the boundary map calculated via the number of $F^n$-paths between critical simplices.
This behavior is enough for the Morse complexes to compute the persistent homology of the filtration of $K$.

\begin{theorem}[{\cite[Proposition 4.2, Theorem 4.3]{filtered}}]
\label{thm:persHom}
The above Morse complexes $\{\mathcal{M}^n \mid n = 1, \dots, N\}$ form a filtration of the Morse complex $\mathcal{M} = \mathcal{M}^N$. 
In addition, for all $n, p,$ and $q$, 
$H^p_q(K^n) \cong H^p_q(\mathcal{M}^n)$.
\end{theorem}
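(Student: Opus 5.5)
The plan is to split the statement into two parts. First, $\mathcal{M}^n$ is a subcomplex of $\mathcal{M}^{n+1}$. Second, the isomorphisms of Theorem~\ref{thm:standardHom} for each $F^n$ can be chosen to commute with the maps induced by inclusion. Once both hold, the persistent homology groups $H^p_q$ are images of maps in a commutative ladder whose rungs are isomorphisms, so they are isomorphic.

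\textbf{Step 1 (the $\mathcal{M}^n$ form a filtration).} By Definition~\ref{def:filteredAcyclic}, a simplex critical in $F^n$ stays critical in $F^{n+1}$, so $\mathcal{M}^n_p \subseteq \mathcal{M}^{n+1}_p$. Next I show that every arrow of $F^n$ is an arrow of $F^{n+1}$.
\begin{itemize}
\item Let $\{\alpha\rightarrow\beta\}\in F^n$. Then $\alpha$ is a tail in $F^{n+1}$, with some partner $\beta'$.
\item If $\beta'\in K^n$, the last clause of Definition~\ref{def:filteredAcyclic} puts $\{\alpha\rightarrow\beta'\}$ in $F^n$, so $\beta'=\beta$.
\item Otherwise, $\beta$ is a head in $F^{n+1}$ of some face $\alpha'$. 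Since $\alpha'<\beta$, we have $\alpha'\in K^n$. The same clause then forces $\{\alpha'\rightarrow\beta\}\in F^n$, so $\alpha'=\alpha$. Now $\alpha$ lies in two arrows of $F^{n+1}$, which is a contradiction.
\end{itemize}
Hence $F^{n+1}$ restricted to $K^n$ is exactly $F^n$.

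Now take an $F^{n+1}$-path starting at a critical $\tau\in K^n$. Every step either passes to a face, which stays in the subcomplex $K^n$, or follows an arrow $\alpha\rightarrow\beta$ with $\alpha\in K^n$. By the argument above, $\beta\in K^n$ and the arrow lies in $F^n$. So the path stays in $K^n$ and is an $F^n$-path. Therefore $\Gamma^{n+1}(\tau,\sigma)=\Gamma^n(\tau,\sigma)$, so $\partial^{n+1}$ restricts to $\partial^n$ and $\mathcal{M}^n\subseteq\mathcal{M}^{n+1}$ is a subcomplex. Taking $n=N$ gives the filtration of $\mathcal{M}=\mathcal{M}^N$.

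\textbf{Step 2 (natural chain equivalences).} For each $n$ I would use Forman's explicit chain maps $\iota^n\colon \mathcal{M}^n\to C(K^n)$ and $\pi^n\colon C(K^n)\to\mathcal{M}^n$. Here $\iota^n(\tau)$ is $\tau$ plus the contributions of gradient paths. The map $\pi^n$ is given by the stabilized discrete flow $\Phi^\infty$ followed by restriction to critical cells. Forman (and \cite{King_mod2} over $\Z/2\Z$) show these are mutually inverse up to chain homotopy; this is the content behind Theorem~\ref{thm:standardHom}.

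The key point is naturality. The formulas for $\iota^n$ and $\Phi^n$ on a chain supported in $K^n$ involve only arrows with tails in $K^n$. By Step 1, these are the same arrows in $F^n$ and $F^{n+1}$. So $\iota^{n+1}\circ j=j\circ\iota^n$ and $\pi^{n+1}\circ j=j\circ\pi^n$ for the inclusions $j$. In homology we therefore get isomorphisms $H_q(K^n)\cong H_q(\mathcal{M}^n)$ that commute with the inclusion-induced maps, and by composition they commute for all pairs $n\le m$.

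\textbf{Conclusion and main obstacle.} Since the rungs are isomorphisms commuting with $F_q^{n\le m}$, the image of $H_q(K^n)\to H_q(K^m)$ is carried isomorphically onto the image of $H_q(\mathcal{M}^n)\to H_q(\mathcal{M}^m)$. This gives $H^p_q(K^n)\cong H^p_q(\mathcal{M}^n)$, read as the persistent groups from frame $n$ to frame $n+p$.

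I expect the main obstacle to be the naturality in Step 2. Theorem~\ref{thm:standardHom} only gives abstract isomorphisms, so I must commit to the explicit flow-based chain maps. I then have to check carefully that the stabilized flow $\Phi^\infty$ computed in $K^{n+1}$ agrees with the one in $K^n$ on chains of $K^n$. This uses that the flow only ever applies $\partial$ and arrows from $K^n$, together with acyclicity for stabilization.
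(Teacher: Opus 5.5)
Your proposal is correct and follows essentially the argument behind this result, which the paper does not prove itself but cites from its source: first show the vector fields restrict compatibly, so that $F^{n+1}$-paths from $K^n$ stay in $K^n$ and $\partial^{n+1}$ restricts to $\partial^n$; then show the flow-based chain equivalences between $C(K^n)$ and $\mathcal{M}^n$ commute with the inclusions, so the ladder of homology isomorphisms carries persistent images onto persistent images. One step deserves to be stated explicitly: a simplex of $K^n$ that is a tail in $F^{n+1}$ was already a tail in $F^n$, because heads and critical simplices of $F^n$ keep that status in $F^{n+1}$; your path argument needs this, since your case analysis was proved only for $F^n$-tails.
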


\section{Colex Vector Fields}

In this section, we introduce an acyclic discrete vector field that is compatible with the colex order induced by a vertex function $f$.
This vector field is uniquely determined by $f$, and its restrictions to the frames of the lower star filtration form a filtered acyclic vector field, allowing us to use it to compute persistent homology.

\subsection{Candidate vertices and construction}

As before, let $K$ be a simplicial complex with vertex set $V$, and let $|V| = n$. 
Additionally, fix a given a bijection $$f \colon V \rightarrow \{1, 2, \dots, n\},$$  henceforth called an \emph{order function}. This function extends to the full complex as 
\[
\begin{matrix}
f \colon &K &\rightarrow &\{1, 2, \dots, n\}\\
& \sigma& \mapsto &  \max\{f(v) \mid v \in \sigma\}.
\end{matrix}
\]

Recall that $\sigma$ is represented as a set of vertices, so the notation $v \in \sigma$ implies $v \in V$.
We also define the \textit{appearance function} $\omega \colon K \rightarrow \{1,2,\dots, n\}$ by
\[
\omega(\sigma) \coloneq \min\{f(v) \mid v \in \sigma\}.
\]

\begin{definition}
\label{def:candidates}
For any $\sigma \in K$, define the \textbf{candidate list} of $\sigma$ as ${V_\sigma \coloneq \{v\in V \mid  \{v\} \cup \sigma \in K\}}$.
We then define the \textbf{candidate vertex} of $\sigma$ with respect to the order function $f$ as $c_f(\sigma) \coloneq \argmin_{v\in V_\sigma} f(v)$. 
\end{definition}
We remark that if the definition of $\overline \St$ from Definition \ref{def:starsAndLinks} is extended to simplices, then $V_\sigma = \overline \St(\sigma) \cap V$.


\begin{figure}[h!]
\begin{center}
    
\begin{tikzpicture}[scale=1.0]


\coordinate (v1) at (0,0.5);
\coordinate (v2) at (2.3,1);
\coordinate (v3) at (1,2);
\coordinate (v4) at (3,3);
\coordinate (v5) at (0.5,3.5);
\coordinate (v6) at (1.5,4);

\filldraw[line width=0.4mm, draw=black, fill=gray!20] (v2) -- (v3) -- (v4)-- cycle;
\draw[line width=0.4mm, black] (v1) -- (v3);
\draw[line width=0.4mm, black] (v6) -- (v4);
\draw[line width=0.4mm, black] (v3) -- (v6);
\draw[line width=0.4mm, black] (v5) -- (v6);
\draw[line width=0.4mm, red] (v4) -- (v3) node[midway, above] {$\tau$};

\draw[black, fill=black] (v1) circle (.1) node[left=1.5mm] {$v_1$};

\filldraw[line width=0.1mm, draw=black, fill=orange] (2.3,1.15) -- (2.45,0.9) -- (2.15,0.9) -- cycle;
\node[text=black, right=1.5mm of v2] {$v_2$};

\filldraw[line width=0.1mm, draw=black, fill=orange] (1,2.15) -- (1.15,1.9) -- (0.85,1.9) -- cycle;
\node[text=black, left=1.5mm of v3] {$v_3$};

\filldraw[line width=0.1mm, draw=black, fill=orange] (3,3.15) -- (3.15,2.9) -- (2.85,2.9) -- cycle;
\node[text=black, right=1.5mm of v4] {$v_4$};

\draw[black, fill=blue] (0.4,3.4) rectangle (0.6,3.6); 
\node[text=blue] at (-0.25,3.45) {$v_5 = \sigma$};

\draw[black, fill=blue] (1.4,3.9) rectangle (1.6,4.1);
\node[text=black, right=1.5mm of v6] {$v_6$};

\end{tikzpicture}
\captionsetup{width=.8\linewidth}
\caption{An example complex $K$, with $\sigma$ and $\tau$ labeled. The candidate list for $V_\sigma$, given by $\{v_5, v_6\}$,  is marked with blue squares, while $V_\tau = \{v_2, v_3, v_4\}$ is marked with orange triangles.}
\label{fig:candidates}

\end{center}
\end{figure}

Consider the example complex $K$ of Fig.~\ref{fig:candidates}, with the order function $f(v_i) = i$ for all vertices.
Consider $\sigma = \{v_5\}$ and $\tau = \{v_3, v_4\}$.
Since $v_5 v_6$ is a simplex in $K$, $v_6 \in V_\sigma$.
Additionally, $\{v_5\} \cup \{v_5\} = \{v_5\} \in K$, so $v_5$ is also in $V_\sigma$. 
No other vertices fit this criteria, so $V_\sigma = \{v_5, v_6\}$.
Using our order function $f$, we evaluate our candidate vertex as $c_f(\sigma) = v_5$.
Similarly, $V_\tau = \{v_2, v_3, v_4\}$, and evaluating $f$, we obtain $c_f(\tau) = v_2$.

\begin{lemma}
\label{lem:vSigmaStar}
For any $v \in V$ and $\sigma \in K$, we have $v \in V_\sigma$ iff $\sigma \in \overline \St(v)$.
\end{lemma}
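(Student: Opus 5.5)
The plan is to unwind both definitions and show that each side is equivalent to the single condition $\{v\} \cup \sigma \in K$. By Definition \ref{def:candidates}, $v \in V_\sigma$ means exactly that $\{v\}\cup\sigma \in K$. So it suffices to prove that $\sigma \in \overline \St(v)$ if and only if $\{v\}\cup\sigma \in K$.

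For the forward direction, assume $\sigma \in \overline \St(v)$. By Definition \ref{def:starsAndLinks}, there is some $\tau \in \St(v)$ with $\sigma \leq \tau$. This gives $v \in \tau$ and $\sigma \subseteq \tau$, so $\{v\}\cup\sigma \subseteq \tau$. Since $K$ is closed under taking subsets, $\{v\}\cup\sigma \in K$. Alternatively, one can split on whether $v\in\sigma$ and use Lemma \ref{lem:closedStarPartition}(a),(b). If $v \in \sigma$, then $\{v\}\cup\sigma = \sigma \in K$. If $v \notin \sigma$, then $\sigma \in \Lk(v)$, and the definition of the link gives $\{v\}\cup\sigma\in K$ directly.

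For the converse, assume $\{v\}\cup\sigma \in K$. Set $\tau = \{v\}\cup\sigma$. Then $v \in \tau$, so $\tau \in \St(v)$, and $\sigma \subseteq \tau$ means $\sigma \leq \tau$. Hence $\sigma \in \overline \St(v)$.

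No step is a real obstacle. The only care needed is the degenerate case $v \in \sigma$, where $\{v\}\cup\sigma = \sigma$. The direct argument via $\tau = \{v\}\cup\sigma$ covers it without a separate case split, so I would present that version and note that it agrees with Lemma \ref{lem:closedStarPartition}.
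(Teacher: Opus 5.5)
Your proof is correct and is essentially the paper's argument: one direction takes $\tau=\{v\}\cup\sigma\in\St(v)$ as the coface witnessing $\sigma\in\overline\St(v)$, and the other uses closure of $K$ under subsets applied to $\{v\}\cup\sigma\subseteq\tau$. The alternative case split via Lemma \ref{lem:closedStarPartition} is harmless but unnecessary, as you note.
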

\begin{proof}
If $v \in V_\sigma$, then $\{v\}\cup \sigma \in K$. 
Hence $\sigma \leq \{v\} \cup \sigma \in \St(v) \subset \overline \St(v)$. On the other hand, if $\sigma \in \overline \St(v)$, then $\sigma \leq \tau \in \St(v)$.
So $v$ and $\sigma$ are both faces of $\tau$, and hence so is $\{v\} \cup \sigma$.
Thus $\{v\} \cup \sigma$ is in $K$.
\end{proof}




\begin{definition}
\label{def:covf}
We define the \textbf{colex vector field induced by $f$}, denoted $F_f$, as follows: 
a pair $$\{\alpha^{(p)} \rightarrow \beta^{(p+1)}\}$$ is in $F_f$ if and only if $\beta = \{c_f(\alpha)\} \cup \alpha$ and $c_f(\alpha) \notin \alpha$.
\end{definition}

\begin{figure}[h!]
\begin{center}
\begin{tikzpicture}[scale=1.0]
\coordinate (v1) at (0,0.5);
\coordinate (v2) at (2.3,1);
\coordinate (v3) at (1,2);
\coordinate (v4) at (3,3);
\coordinate (v5) at (0.5,3.5);
\coordinate (v6) at (1.5,4);

\filldraw[line width=0.4mm, draw=black, fill=gray!20] (v2) -- (v3) -- (v4)-- cycle;
\draw[line width=0.4mm, black] (v1) -- (v3);
\draw[line width=0.4mm, blue] (v6) -- (v4);
\draw[line width=0.4mm, black] (v3) -- (v6);
\draw[line width=0.4mm, blue] (v5) -- (v6);
\draw[line width=0.4mm, black] (v4) -- (v3) node[midway, above] {$\tau$};
\draw[line width=0.4mm, blue] (v3) -- (v2);

\draw[black, fill=blue] (v1) circle (.1) node[left=1.5mm] {$v_1$};
\draw[black, fill=blue] (v2) circle (.1) node[right=1.5mm] {$v_2$};
\draw[black, fill=black] (v3) circle (.1) node[left=1.5mm] {$v_3$};
\draw[black, fill=black] (v4) circle (.1)
node[right=1.5mm] {$v_4$};
\draw[black, fill=blue] (v5) circle (.1) node[left=1.5mm, text=black] {$v_5 = \sigma$};
\draw[black, fill=black] (v6) circle (.1) node[right=1.5mm] {$v_6$};

\draw[-latex, red, line width = 0.6mm] (v3) -- ($(v3)!0.5!(v1)$);
\draw[-latex, red, line width = 0.6mm] (v4) -- ($(v2)!0.5!(v4)$);
\draw[-latex, red, line width = 0.6mm] (v6) -- ($(v3)!0.5!(v6)$);
\draw[-latex, red, line width = 0.6mm] ($(v3)!0.5!(v4)$) -- ($(v3)!0.5!(v4)!0.5!(v2)$);

\end{tikzpicture}
\captionsetup{width=.8\linewidth}
\caption{$F_f$ drawn on the same simplicial complex as in Figure \ref{fig:candidates}. Critical cells, which are not part of any arrow, are colored blue and listed for clarity: $\{v_1, v_2, v_2 v_3, v_5, v_4 v_6, v_5 v_6\}$.}
\label{fig:drawCovf}
\end{center}

\end{figure}

If there is an arrow of $F_f$ with tail $\alpha$, then the head of that arrow must be obtained by adding $c_f(\alpha)$; however, this is done only when adding that vertex yields a different simplex.
Continuing the same example of Figure \ref{fig:candidates}, in Figure \ref{fig:drawCovf} we draw every arrow of $F_f$ on $K$.
Observe that $\sigma$ is not the tail of any arrow, as $c_f(\sigma) = v_5 \in \sigma$.
On the other hand, $\tau$ is the tail of the arrow 
$v_3 v_4 \rightarrow v_2 v_3 v_4$, since $c_f(\tau) = v_2 \notin \tau$.

\begin{prop}
    \label{prop:covfProperties}
    Fix an order function $f$ on the vertices of a simplicial complex $K$, and extend $f$ to all of $K$ as before, and let $\omega$ be its appearance function.
    Suppose $\{\alpha \rightarrow \beta\} \in F_f$, where $ \beta =  \{c_f(\alpha)\} \cup \alpha $. Then:
    \begin{enumerate}[label=(\alph*)]
    
    \item $f(c_f(\alpha)) < f(v)$ for any vertex $v \in \alpha$, or equivalently, $c_f(\alpha) \colex v$.
    \item $f(c_f(\alpha)) < \omega(\alpha)$.
    \item $f(\alpha) = f(\beta)$.
    \item $\alpha$ is the unique codimension-1 face of $\beta$ such that $\omega(\alpha) > \omega(\beta)$.
    \item $\alpha \colex \beta$ are adjacent in colex order: that is, there exists no $\gamma \in K$ such that 
            $\alpha \colex \gamma \colex \beta$. 
    \item In the simplex-wise lower star filtration given by the colex order induced by $f$, the persistent homology contains the pair $(\alpha, \beta)$.
    \end{enumerate}   
\end{prop}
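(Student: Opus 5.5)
The plan is to derive everything from one observation. Write $c = c_f(\alpha)$.

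\textbf{Step 0 (key observation).} Every vertex $v \in \alpha$ lies in $V_\alpha$, since $\{v\}\cup\alpha = \alpha \in K$. Hence $\alpha \subseteq V_\alpha$. Because $c$ minimizes $f$ over $V_\alpha$, $f$ is injective, and $c \notin \alpha$, we get $f(c) < f(v)$ for every $v\in\alpha$. This is (a); the colex reformulation is just the vertex order.

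\textbf{Steps (b)--(d).} Part (b) follows by taking the minimum over $v\in\alpha$ in (a). For (c), note $f(\beta)=\max(f(c),f(\alpha))=f(\alpha)$ by (a). For (d), the codimension-1 faces of $\beta$ are $\beta\setminus\{u\}$ for $u\in\beta$, and $\omega(\beta)=f(c)$ by (a).
\begin{itemize}
\item Removing $u=c$ gives $\alpha$, with $\omega(\alpha)>f(c)=\omega(\beta)$.
\item Removing any $u\neq c$ keeps $c$, so the face has $\omega=f(c)=\omega(\beta)$.
\end{itemize}
Hence $\alpha$ is the unique codimension-1 face with $\omega(\alpha)>\omega(\beta)$.

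\textbf{Step (e), the combinatorial core.} View simplices as words whose letters are sorted increasingly by $f$. Then $\beta$ is the word $c\,\alpha$, i.e.\ $\beta=\cdots\alpha$, with $c$ prepended. Suppose $\gamma\in K$ satisfies $\alpha\colex\gamma\colex\beta$, and compare $\gamma$ with $\alpha$ reading from the end. There are three cases.
\begin{itemize}
\item \emph{$\gamma$ differs from $\alpha$ at some position within the length of $\alpha$.} The first such position decides both comparisons $\gamma$ vs.\ $\alpha$ and $\gamma$ vs.\ $\beta$, since $\beta$ has the same suffix $\alpha$. So the two comparisons have the same outcome, contradicting $\alpha\colex\gamma\colex\beta$.
\item \emph{$\gamma$ agrees with $\alpha$ on all of $\gamma$'s letters and $|\gamma|\le|\alpha|$.} Then $\gamma=\alpha$ or $\gamma\colex\alpha$, again a contradiction.
\item \emph{$\gamma=\cdots u\,\alpha$ with an extra letter $u$.} Then $\{u\}\cup\alpha\subseteq\gamma\in K$, so $u\in V_\alpha$. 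If $u=c$, then either $\gamma=\beta$ or $\gamma$ is strictly longer with suffix $\beta$, hence $\beta\colex\gamma$. If $u\neq c$, minimality gives $f(u)>f(c)$, hence $\beta\colex\gamma$.
\end{itemize}
Every case is a contradiction, so $\alpha$ and $\beta$ are adjacent. I expect this careful case analysis of colex comparisons to be the main, if routine, obstacle.

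\textbf{Step (f).} Let $L$ be the subcomplex of all simplices strictly colex-before $\alpha$. By (e), the filtration passes $L \subset L\cup\{\alpha\} \subset L\cup\{\alpha,\beta\}$ consecutively.

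In $L\cup\{\alpha,\beta\}$, the only proper coface of $\alpha$ is $\beta$. Any other coface of $\alpha$ would come after $\alpha$ (colex is compatible with the face relation) and before $\beta$, which (e) forbids.

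Now, $\partial\alpha$ lies in $L$. If $\alpha$ did not create a class, then $\alpha$ would kill a class, so $\partial\alpha$ would be a nonzero class in $H(L)$. But $\partial\beta=\alpha+(\text{chain in }L)$, so $\partial\alpha$ equals a boundary in $L$, contradiction. Hence $\alpha$ is positive. Moreover $\partial\beta$ is a cycle containing $\alpha$, so it represents a class born at $\alpha$, and $\beta$ kills it.

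For the matrix formulation: column $\beta$ of $D$ has its lowest $1$ at $\alpha$, because $\alpha$ is the colex-largest face of $\beta$ by (e). No earlier column has its low at $\alpha$, since $\beta$ is $\alpha$'s only coface up to this point. So Algorithm~\ref{alg:MatrixReduction} never modifies column $\beta$, and it outputs the pair $(\alpha,\beta)$.
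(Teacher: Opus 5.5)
Your proof is correct and follows the paper's route part by part. Parts (a)--(d) come from $\alpha\subseteq V_\alpha$ together with the minimality of $c_f(\alpha)$. Part (e) compares $\gamma$ with $\alpha$ and $\beta$ from the end of the words and uses that an extra letter $u$ preceding $\alpha$ lies in $V_\alpha$. Part (f) uses that $\alpha$ is the last face of $\beta$ and that $\beta$ enters immediately after $\alpha$.

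Your version is slightly more complete than the paper's in two places. In (e) you handle the case where $\gamma$ is a suffix of $\alpha$, which the paper skips. In (f) you justify the pairing rigorously through the reduction algorithm, where the paper gives only a brief sketch.
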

\begin{proof}$ $

    \begin{enumerate}[label=(\alph*)]
    \item \label{test}
    Fix $v \in \alpha$. It is also in $V_\alpha$ by definition. 
        Since $c_f(\alpha)$ is not in $\alpha$ and minimizes $f$ on $V_\alpha$, 
        we know $f(c_f(\alpha)) < f(v)$.
        Moreover, the colex order on $K$ is induced by the order function on the vertices, with $v_i \colex_f v_j$ if and only if $f(v_i) < f(c_j)$.
        Therefore $c_f(\alpha) \colex v$.
        
    \item Taking the minimum of the right side of (a) over all vertices in $\alpha$, 
    ${f(c_f(\alpha)) < \min_{v\in \alpha} f(v) = \omega(\alpha)}$.

    \item Taking the maximum of the right side of (a) over all vertices in $\alpha$, 
    ${f(c_f(\alpha)) < \max_{v\in \alpha} f(v) = f(\alpha)}$.
    In particular, $\argmax \{f(v) \mid  v \in \beta  = \{c_f(\alpha)\} \cup \alpha\}$ must be in $\alpha$.
    Hence \[\argmax \{f(v) \mid  v \in \beta \} = \argmax \{f(v) \mid  v \in \alpha \},\]
    and so $f(\beta) = f(\alpha).$

    \item By (a), $\argmin \{f(v) \mid  v \in \beta  = c_f(\alpha) \cup \alpha\} = c_f(\alpha)$.
    Hence $\omega(\beta) = f(c_f(\alpha)) < \omega(\alpha).$
    If $\tau < \beta$ is any other codimension-1 face, then $c_f(\alpha) \in \tau$, 
    so $\omega(\tau) \leq \omega(\beta)$.
    Therefore the described face is in fact unique.

    \item Suppose $\alpha \colex \gamma \colex \beta$. 
    Let $\alpha = \alpha_0 \alpha_1 \cdots \alpha_k$, $\beta = \beta_0 \beta_1 \cdots \beta_{k+1}$, and $\gamma = \gamma_0 \gamma_1 \cdots \gamma_\ell$.
    Recall that ${\beta = \{c_f(\alpha)\} \cup \alpha}$, so by (a), $\beta_0 = c_f(\alpha)$ and $\beta_i = \alpha_{i-1}$ for $i \in \{1, \dots, k+1\}$.
    Define $X$ as  ${\min\{i: \alpha_{k-i}\neq \gamma_{\ell-i}\}}$
    and $Y\coloneq \min\{i: \beta_{k+1-i}\neq \gamma_{\ell-i}\}$.
    If $X$ exists, then $X = Y$, since $\beta_{k+1-i} = \alpha_{k-i}$.
    On the other hand, if $X$ does not exist, then $\gamma = \cdots\gamma_{\ell-k-1}\alpha$.
    If $\gamma_{\ell-k-1} = \beta_0$, then $\gamma = \cdots \beta$, and $\beta \colex \gamma$.
    Therefore, ${\gamma_{\ell-k-1} \colex \beta_0 = c_f(\alpha)}$. 
    However, $\gamma_{\ell-k-1} \alpha \in K$, so by definition of $c_f$, ${c_f(\alpha) \colex \gamma_{\ell-k-1}}$, and we have a contradiction.

    \item By (d) and (e), $\alpha$ is the last codimension-1 face of $\beta$ to be added.
    Therefore, the addition of $\alpha$ creates the cycle $[\partial \beta]$ in homology.
    On the other hand, this will become 0 as soon as $\beta$ is added.
    Since $\beta$ is added in the very next step by (e), the cycle cannot die any earlier, and therefore these two simplices form a pair in persistent homology.

    
    \end{enumerate}

\end{proof}

As a consequence of the colex order part of (a), we will sometimes write $\beta = c_f(\alpha) \alpha$, 
or $\beta = v\alpha$ when it is clear that $v$ is the candidate vertex of $\alpha$.
We also note these implications do not necessarily go the other way.
In particular, for (d), $\alpha$ could be such a face for many different $\beta_i$.
For example, in Figure \ref{fig:drawCovf}, consider the two simplices $v_3 v_6$ and $v_4 v_6$.
For both simplices, $\{v_6\}$ is the unique codimension-1 face with a higher value of $\omega$.
However, $\{v_6 \rightarrow v_3 v_6\}$ is in $F_f$, while $\{v_6 \rightarrow v_4 v_6\}$ is not.
We remark that in general, when $\alpha$ is such a face for many different $\beta_i$, the definition of $c_f$ will pair $\alpha$ with the coface $\beta_i$ that minimizes $\omega(\beta_i)$.

\begin{prop}
    $F_f$ is a discrete vector field; that is, no simplex is in more than one pair.
\end{prop}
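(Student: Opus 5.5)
We need to show that no simplex lies in more than one pair of $F_f$. The approach is to split into three cases according to the roles a simplex can play: tail in two arrows, head in two arrows, or tail in one arrow and head in another. Each case is ruled out directly from Definition \ref{def:covf} and Proposition \ref{prop:covfProperties}.

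\textbf{A simplex is the tail of at most one arrow.} Suppose $\{\alpha \rightarrow \beta\}$ and $\{\alpha \rightarrow \beta'\}$ both lie in $F_f$. By Definition \ref{def:covf}, $\beta = \{c_f(\alpha)\}\cup\alpha = \beta'$. The candidate vertex is well defined because $f$ is injective, so the $\argmin$ over $V_\alpha$ is a single vertex; $V_\alpha$ is nonempty since it contains the vertices of $\alpha$. Hence $\beta = \beta'$.

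\textbf{A simplex is the head of at most one arrow.} Suppose $\{\alpha \rightarrow \beta\}$ and $\{\alpha' \rightarrow \beta\}$ both lie in $F_f$. By Proposition \ref{prop:covfProperties}(d), $\alpha$ is the unique codimension-1 face of $\beta$ with $\omega(\alpha) > \omega(\beta)$. The same holds for $\alpha'$, so $\alpha = \alpha'$. Concretely, both are $\beta$ with its $f$-minimal vertex removed.

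\textbf{No simplex is both a head and a tail.} This is the main step, and it is where the argument needs care. Suppose $\{\alpha \rightarrow \beta\}$ and $\{\beta \rightarrow \gamma\}$ are both in $F_f$. Set $c = c_f(\alpha)$, so that $\beta = \{c\}\cup\alpha$.

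First, $c_f(\beta)$ cannot be a vertex of $\beta$, since otherwise there is no arrow out of $\beta$. Next, $V_\beta \subseteq V_\alpha$: if $\{v\}\cup\beta \in K$, then its face $\{v\}\cup\alpha$ is in $K$ by closure under subsets. Therefore
\[ f(c_f(\beta)) \geq \min_{v \in V_\alpha} f(v) = f(c). \]
On the other hand, $c \in \beta \subseteq V_\beta$, so $f(c_f(\beta)) \leq f(c)$. The two inequalities force $f(c_f(\beta)) = f(c)$, and injectivity of $f$ gives $c_f(\beta) = c \in \beta$. This contradicts $\beta$ being a tail.

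Equivalently, Proposition \ref{prop:covfProperties}(a) applied to the arrow out of $\beta$ would require $f(c_f(\beta)) < f(c)$, which contradicts minimality of $c$ on $V_\alpha \supseteq V_\beta$. The only subtlety is the inclusion $V_\beta \subseteq V_\alpha$, which follows from the simplicial closure property.

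Combining the three cases, every simplex lies in at most one pair, so $F_f$ is a discrete vector field.
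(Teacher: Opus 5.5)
Your proof is correct and follows the paper's own argument. Tail uniqueness comes from $c_f(\alpha)$ being well defined, head uniqueness from Proposition \ref{prop:covfProperties}(d), and the head--tail case is ruled out because the candidate vertex of $\beta$ lies in $V_\alpha$. The paper phrases that last step as $u\alpha$ being a face of $\gamma = u\beta$ and then uses part (a) to get $f(u) < f(c_f(\alpha))$, which is exactly your ``equivalently'' remark; your main version, deriving $c_f(\beta) = c_f(\alpha) \in \beta$ directly from $V_\beta \subseteq V_\alpha$, is a slightly more self-contained packaging of the same idea.
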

\begin{proof}
    Since $\{c_f(\alpha)\} \cup \alpha$ is defined uniquely for each $\alpha$, no $\alpha$ can be the tail of two arrows.
    Similarly, by Prop.~\ref{prop:covfProperties}(d), a simplex $\beta$ can only be the head of the arrow $\{\alpha \rightarrow \beta\}$ where $\alpha$ is the unique codimension-1 face of $\beta$ such that $\omega(\alpha) > \omega(\beta)$.
    Therefore, if some $\beta$ is in two arrows, it must be the head of one and the tail of another.
    
    Suppose now that $\{\alpha \rightarrow \beta = v \alpha\}$ and $\{\beta \rightarrow \gamma =  u\beta\}$ are two arrows in $F_f$ with $u, v \in V$. 
    Since $\gamma = u\beta$ and $v \in \beta$, by Prop.~\ref{prop:covfProperties}(a)
    we must have $f(u) < f(v)$.
    Moreover, the simplex $u\alpha$ must be in $K$, since it is a face of $\gamma$. 
    In particular, $u \in V_\alpha$.
    Hence $f(c_f(\alpha)) \leq f(u) < f(v)$, so $c_f(\alpha) \neq v$, and we have a contradiction.    
\end{proof}

\begin{prop}
    \label{prop:acyclic}
     There are no nontrivial closed $F_f$-paths.
\end{prop}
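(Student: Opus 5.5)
We will produce a quantity that strictly increases along every nontrivial step of an $F_f$-path. A closed path would then return to its starting value, which is impossible. The natural candidate is the colex order, because Proposition~\ref{prop:covfProperties}(e) tells us each arrow joins two simplices that are adjacent in colex order. We will not use colex order directly, though, since it gives no control over the ``downward'' steps $\beta_i > \alpha_{i+1}$. Instead we will track the appearance function $\omega$ on the $p$-simplices of the path.

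Take an $F_f$-path $\alpha_0 \rightarrow \beta_0 > \alpha_1 \rightarrow \beta_1 > \cdots$, where $\beta_i = c_f(\alpha_i)\alpha_i$. The key claim is that $\omega(\alpha_{i+1}) \ge \omega(\alpha_i)$, with equality analysed separately. By Proposition~\ref{prop:covfProperties}(d), $\omega(\beta_i) = f(c_f(\alpha_i)) < \omega(\alpha_i)$, and $\alpha_i$ is the unique codimension-1 face of $\beta_i$ that omits $c_f(\alpha_i)$. Since $\alpha_{i+1} \neq \alpha_i$, the face $\alpha_{i+1}$ contains $c_f(\alpha_i)$, so $\omega(\alpha_{i+1}) = f(c_f(\alpha_i)) < \omega(\alpha_i)$.

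So $\omega$ strictly decreases along $\alpha_0, \alpha_1, \dots$. This might look wrong at first, but it is consistent with the arrows going ``down'' in the order. In any case strict monotonicity is all we need. If the path were closed with $\alpha_{r+1} = \alpha_0$ and $r \ge 0$, we would get $\omega(\alpha_0) > \omega(\alpha_1) > \cdots > \omega(\alpha_{r+1}) = \omega(\alpha_0)$, a contradiction. The degenerate case is $r = 0$: then $\alpha_1 = \alpha_0$, which the definition of an $F_f$-path already excludes, and the chain of inequalities covers it anyway.

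The main obstacle I expect is making sure that $\alpha_{i+1}$ really must contain $c_f(\alpha_i)$. Indeed $\beta_i = \{c_f(\alpha_i)\} \cup \alpha_i$ has exactly $p+2$ vertices, and its codimension-1 faces are obtained by deleting one vertex. Deleting $c_f(\alpha_i)$ gives back $\alpha_i$, so every other face keeps $c_f(\alpha_i)$ as its minimum-$f$ vertex, by Proposition~\ref{prop:covfProperties}(a). Once this is checked, the argument is essentially the same as the observation in the proof of Proposition~\ref{prop:covfProperties}(d). No colex-order comparison beyond part (a) is needed, and paths that start at a head reduce to this case by dropping the first element.
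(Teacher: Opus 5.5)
Your proof is correct and is essentially the paper's argument. By Proposition~\ref{prop:covfProperties}(a) and (d), $\omega(\alpha_{i+1}) = \omega(\beta_i) = f(c_f(\alpha_i)) < \omega(\alpha_i)$, so $\omega$ strictly decreases along the tails and no nontrivial $F_f$-path can close up. You should clean up the opening, which promises a strictly increasing quantity and a claim $\omega(\alpha_{i+1}) \ge \omega(\alpha_i)$ before you correctly derive the strict decrease.
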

\begin{proof}
    Suppose we have a nontrivial $F_f$-path $\alpha_0 \rightarrow \beta_0 > \alpha_1 \rightarrow \dots >\alpha_{r+1}$.
    For each ${i \in \{0, 1, \dots, r\}}$, $\omega(\alpha_i) > \omega(\beta_i)$ by Prop. \ref{prop:covfProperties} (d).
    Since $\alpha_{i+1}$ is a different codimension-1 face, we must have $\omega(\alpha_{i+1}) = \omega(\beta_i)$.
    Hence 
    \[ \omega(\alpha_{r+1}) < \omega(\alpha_{r}) < \dots < \omega(\alpha_1) < \omega(\alpha_0)\]
    and in particular, $\alpha_{r+1} \neq \alpha_0$.    
\end{proof}

Hence $F_f$ is an acyclic discrete vector field, and by Theorem \ref{thm:standardHom}, we may use it to compute the homology of $K$. 
We end this section by showing that it computes the persistent homology of the lower star filtration.
For this, we must refine the lower star filtration to its simplex-wise subfiltration as follows.
Let ${\{\sigma_1, \sigma_2, \dots, \sigma_N\}}$ be the list of all simplices of $K$ sorted in colex order with respect to $f$, and for each $i \in \{0, \dots, N\}$, define $K^i \coloneq \{\sigma_j \mid j \leq i\}$.
Then $\tilde{\mathcal{K}} \coloneq \{\emptyset = K^0 \subset K^1 \subset \cdots \subset K^N = K\}$ is the\textit{simplex-wise lower star filtration} of $K$ induced by the colex order associated with $f$.

By Prop.~\ref{prop:covfProperties} (e), every arrow of $F_f$ is of the form $\{\sigma_i \rightarrow \sigma_{i+1}\}$.
We wish to construct a filtration for which the restrictions of $F_f$ form a filtered acyclic vector field as in Definition \ref{def:filteredAcyclic}.
If the head and tail of an arrow first appear in different frames of our filtration, this definition will not be met. So, consider
\begin{equation}
  \label{eq:tails}
T \coloneq \{i \in \{1, \dots, N-1\} \mid \{\sigma_i \rightarrow \sigma_{i+1}\} \in F_f\},
\end{equation}
the set of indices of tails of arrows.
We use these to specify the slightly less refined filtration $\mathcal{K}'$, obtained by ignoring each of the $K^i$ for $i \in T$.
In particular, letting $J \coloneq \{1, \dots, N\} \setminus T$ and indexing it as $\{j_1 < j_2 < \dots <j_{N-|T|}\}$, 
we may use it as the indices for our filtration
\begin{equation}
  \label{eq:covfFiltration}
\mathcal{K}' \coloneq \{\emptyset = K^0 \subset K^{j_1} \subset K^{j_2} \subset \cdots \subset K^{j_{N-|T|}} = K^N = K\}.
\end{equation}

For each $j \in J$, let $F^{j}_f$ be the restriction of $F_f$ to $K^j \in \mathcal{K}'$; that is, $\{\alpha \rightarrow \beta\}$ is in $F^j_f$ if and only if $\alpha, \beta \in K^j$ and $\{\alpha \rightarrow \beta\} \in F_f$.

\begin{figure}[h!]
\centering
\begin{subfigure}{.1\textwidth}
  \centering
  \begin{tikzpicture}[scale=0.4]
\coordinate (v1) at (0,0.5);

\draw[black, fill=black] (v1) circle (.1) node[left=1.5mm] {$v_1$};

\end{tikzpicture}
  \caption{$K^1$}
\end{subfigure}%
\begin{subfigure}{.17\textwidth}
  \centering

  \begin{tikzpicture}[scale=0.4]
\coordinate (v1) at (0,0.5);
\coordinate (v2) at (2.3,1);

\draw[black, fill=black] (v1) circle (.1) node[left=1.5mm] {$v_1$};
\draw[black, fill=black] (v2) circle (.1) node[right=1.5mm] {$v_2$};

\end{tikzpicture}
  \caption{$K^2$}
\end{subfigure}
\begin{subfigure}{.17\textwidth}
  \centering

  \begin{tikzpicture}[scale=0.4]
\coordinate (v1) at (0,0.5);
\coordinate (v2) at (2.3,1);
\coordinate (v3) at (1,2);

\draw[line width=0.4mm, black] (v1) -- (v3);

\draw[black, fill=black] (v1) circle (.1) node[left=1.5mm] {$v_1$};
\draw[black, fill=black] (v2) circle (.1) node[right=1.5mm] {$v_2$};
\draw[black, fill=black] (v3) circle (.1) node[left=1.5mm] {$v_3$};

\draw[-latex, red, line width = 0.6mm] (v3) -- ($(v3)!0.5!(v1)$);

\end{tikzpicture}
    \caption{$K^4$}
\end{subfigure}%
\begin{subfigure}{.17\textwidth}
  \centering

  \begin{tikzpicture}[scale=0.4]
\coordinate (v1) at (0,0.5);
\coordinate (v2) at (2.3,1);
\coordinate (v3) at (1,2);

\draw[line width=0.4mm, black] (v1) -- (v3);
\draw[line width=0.4mm, black] (v2) -- (v3);

\draw[black, fill=black] (v1) circle (.1) node[left=1.5mm] {$v_1$};
\draw[black, fill=black] (v2) circle (.1) node[right=1.5mm] {$v_2$};
\draw[black, fill=black] (v3) circle (.1) node[left=1.5mm] {$v_3$};

\draw[-latex, red, line width = 0.6mm] (v3) -- ($(v3)!0.5!(v1)$);

\end{tikzpicture}
 \caption{$K^5$}
\end{subfigure}%
\begin{subfigure}{.17\textwidth}
  \centering
  \begin{tikzpicture}[scale=0.4]
\coordinate (v1) at (0,0.5);
\coordinate (v2) at (2.3,1);
\coordinate (v3) at (1,2);
\coordinate (v4) at (3,3);

%
\draw[line width=0.4mm, black] (v1) -- (v3);
\draw[line width=0.4mm, black] (v2) -- (v3);
\draw[line width=0.4mm, black] (v2) -- (v4);

\draw[black, fill=black] (v1) circle (.1) node[left=1.5mm] {$v_1$};
\draw[black, fill=black] (v2) circle (.1) node[right=1.5mm] {$v_2$};
\draw[black, fill=black] (v3) circle (.1) node[left=1.5mm] {$v_3$};
\draw[black, fill=black] (v4) circle (.1) node[right=1.5mm] {$v_4$};

\draw[-latex, red, line width = 0.6mm] (v3) -- ($(v3)!0.5!(v1)$);
\draw[-latex, red, line width = 0.6mm] (v4) -- ($(v2)!0.5!(v4)$);

\end{tikzpicture}
\caption{$K^7 = K$}
\end{subfigure}
\captionsetup{width=.8\linewidth}
\caption{A depiction of $\mathcal{K'}$ of Equation \ref{eq:covfFiltration}, with $F^j_f$ overlaid. We use $K^j$ to refer to the complex created by the first $j$ simplices of $K$ in colex order, with $K^0$ omitted. Each step adds the next simplex in colex order, or if that simplex is part of an arrow, it adds both simplices of the arrow instead.}
\label{fig:filtered}
\end{figure}

We provide an example of this in Figure \ref{fig:filtered}, where $K = K^8$ and the order function is given by $f(v_i) = i$ for all vertices.
Here, \[\mathcal{K}' = \{\emptyset = K^0 \subset K^1 \subset K^2 \subset K^4 \subset K^5 \subset K^7 = K\}.\]
Since $\{v_3 \rightarrow v_1 v_3\} \in F_f$, including $K^3 = \{ \{v_1\}, \{v_2\}, \{v_3\}\}$ in the filtration would add the two simplices of the arrow at different times, preventing these restrictions from being a filtered acyclic vector field.

\begin{prop}
    For a fixed order function $f$ on a simplicial complex $K$, let $J$ be the non-tail indices of $F_f$ as above, and let $F^{j}_f$ be the restriction of $F_f$ to $K^j \in \mathcal{K}'$ for each $j$.
    Define $\mathcal{F} \coloneq \{ F^{j}_f \mid j \in J\}$.
    Then $\mathcal{F}$ is a filtered acyclic vector field of $\mathcal{K}'.$
\end{prop}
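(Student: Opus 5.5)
We check the conditions of Definition \ref{def:filteredAcyclic} one at a time for the family $\mathcal{F} = \{F^j_f \mid j \in J\}$ over the frames $K^j$ of $\mathcal{K}'$. The key structural fact is Prop.~\ref{prop:covfProperties}(e): every arrow of $F_f$ has the form $\{\sigma_i \rightarrow \sigma_{i+1}\}$ with $i \in T$. Since $T$ is removed from the indexing set, no frame $K^j$ with $j \in J$ contains a tail $\sigma_i$ without also containing its head $\sigma_{i+1}$. To see this, note that $j \ge i$ and $j \ne i$ force $j \ge i+1$. Conversely, any frame containing a head $\sigma_{i+1}$ has $j \ge i+1 > i$, so it also contains the tail. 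Hence every arrow of $F_f$ lies entirely inside or entirely outside each frame $K^j$.

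First we check that each $F^j_f$ is an acyclic discrete vector field on $K^j$. It is a subset of $F_f$, both simplices of each of its pairs lie in $K^j$, and no simplex lies in two pairs because this already holds in $F_f$. Any nontrivial closed $F^j_f$-path would also be a nontrivial closed $F_f$-path, which Prop.~\ref{prop:acyclic} rules out. The frames $K^j$ are subcomplexes, since colex order is a compatible ordering and therefore every face precedes its cofaces.

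Next we check that tails, heads, and critical simplices are preserved from $F^{j_m}_f$ to $F^{j_{m+1}}_f$. Take $\sigma \in K^{j_m}$. If $\sigma$ is a tail or a head in $F^{j_m}_f$, its arrow lies in $K^{j_m} \subseteq K^{j_{m+1}}$, so it is still present in $F^{j_{m+1}}_f$ and $\sigma$ keeps the same role. If $\sigma$ is critical in $F^{j_m}_f$, we must show it is critical in $F_f$ itself. Suppose instead that $\sigma$ belongs to some arrow of $F_f$. By the all-or-nothing observation, that whole arrow lies in $K^{j_m}$, so it belongs to $F^{j_m}_f$, contradicting criticality. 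Therefore $\sigma$ is critical in every $F^j_f$.

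The final condition is immediate from the definition of restriction: if $\{\alpha\rightarrow\beta\} \in F^{j_{m+1}}_f$ and $\alpha,\beta \in K^{j_m}$, then the arrow lies in $F_f$ with both ends in $K^{j_m}$, so it belongs to $F^{j_m}_f$.

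The only step that needs care is the all-or-nothing observation, and in particular the edge cases. These are the last index $N$, which belongs to $J$ because $T \subseteq \{1,\dots,N-1\}$, and the role of the empty frame $K^0$. Everything else is bookkeeping.
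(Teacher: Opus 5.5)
Your proposal is correct and follows essentially the same route as the paper. Acyclicity of each $F^j_f$ is inherited from Prop.~\ref{prop:acyclic} by restriction, and tails, heads, and critical simplices are preserved because, by Prop.~\ref{prop:covfProperties}(e) and the removal of the tail indices $T$, both simplices of every arrow first appear in the same frame of $\mathcal{K}'$. Your ``all-or-nothing'' observation is the paper's ``by construction'' step made explicit, and you also check the restriction condition of Definition~\ref{def:filteredAcyclic}, which the paper leaves implicit.
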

\begin{proof}
    First we show that each $F^{j}_f$ is an acyclic discrete vector field.
    Since it is a restriction of $F_f$, no simplex is in more than one arrow.
    Now suppose that $F^{j}_f$ has a non-trivial closed path
    \[\alpha_0 \rightarrow \beta_0 > \alpha_1 \rightarrow \dots >\alpha_{r+1} = \alpha_0.\]
    Since each arrow is in $F_f$, and each face relation still holds in $K$, this would also form a nontrivial closed path in $F_f$, which contradicts Prop.~\ref{prop:acyclic}.

    Now, fix $\sigma \in K$, 
    and let $j$ be such that $\sigma \notin K^{j-1}$ but $\sigma \in K^j$.
    Let $\ell \geq j$.
    If $\sigma$ is critical in $F_f$, then since $F^\ell_f$ is a restriction,
    it will be critical in $F^\ell_f$.
    If $\sigma$ is a tail of the arrow $\sigma \rightarrow \tau \in F_f$, 
    then by construction, we also have $\tau \notin K^{j-1}$, $\tau \in K^j$.
    As a restriction, the arrow $\sigma \rightarrow \tau$ is also present in $F^j_f$, as well as $F^\ell_f$.
    The same argument holds if $\sigma$ is the tail of an arrow.
    Thus $\mathcal{F}$ fits all requirements of Definition \ref{def:filteredAcyclic}.
\end{proof}

By Theorem \ref{thm:persHom}, the Morse complex associated with $F_f$ can be used to compute the persistent homology of $\mathcal{K}'$.
We end by showing that the interval decomposition of $H_*(\mathcal{K}')$ matches that of the simplex-wise lower star filtration. 
For this we require a technical lemma from \cite{oudot}.
We preemptively mention that in the lemma, the function $t$ on simplices is induced by their first appearance time in a fixed filtration; this is not the same as the order function $f$.

\begin{lemma}{(\cite{oudot}, Lemma 2.12)}
    Let $\mathcal{K}$ be a filtration ${\emptyset \subseteq K_{t_1} \subseteq \dots \subseteq K_{t_n} = K}$, 
    and define $t(\sigma) = \min\{t_i \mid \sigma \in K_{t_i}\}$.
    Let $K = \{\sigma_1, \sigma_2, \dots, \sigma_m\}$ be a total order on $K$ that is compatible with $t$ and the incidence relations between level sets; that is, $i < j$ whenever $t(\sigma_i) < t(\sigma_j)$ or $\sigma_i < \sigma_j$.
    Define $\tilde{\mathcal{K}}$ as the filtration \[\emptyset \subseteq \{ \sigma_1 \} \subseteq \{ \sigma_1, \sigma_2 \} \subseteq \dots \subseteq \{ \sigma_1, \dots, \sigma_m \} = K.\]

    Then there is a partial matching between the interval decompositions of $H_*(\mathcal{K})$ and $H_*(\tilde{\mathcal{K}})$ such that:
    \begin{itemize}
        \item summands $\mathbb{I}[i, +\infty)$ of $\mathsf{H}_p(\tilde{\mathcal{K}})$ are matched with summands $\mathbb{I}[t(\sigma_i), +\infty)$ of $\mathsf{H}_p(\mathcal{K})$, and vice-versa,

        \item summands $\mathbb{I}[i, j)$ of $\mathsf{H}_p(\tilde{\mathcal{K}})$ where $t(\sigma_i) < t(\sigma_j)$ are matched with summands $\mathbb{I}[t(\sigma_i), t(\sigma_j))$ of $\mathsf{H}_p(\mathcal{K})$, and vice-versa,

        \item all other summands of $\mathsf{H}_p(\mathcal{K})$, i.e. summands $\mathbb{I}[i, j)$ where $t(\sigma_i) = t(\sigma_j)$, are unmatched.
    \end{itemize}
\end{lemma}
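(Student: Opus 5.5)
This is the standard ``refining a filtration to a simplex-wise one'' argument, and I would prove it by induction on the number of ties, reducing to the basic rank computations that define persistence diagrams.

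\emph{Step 1: reduction to one coarsening.} Both $\mathcal{K}$ and $\tilde{\mathcal{K}}$ are restrictions of the same simplex-wise chain of complexes $K^i=\{\sigma_1,\dots,\sigma_i\}$. Every $K_{t_k}$ equals some $K^{i_k}$, where $i_k$ is the last index with $t(\sigma_{i_k})\le t_k$. This holds because the order is compatible with $t$, so the simplices with $t(\sigma)\le t_k$ form an initial segment. The face condition is what guarantees that each $K^i$ is a subcomplex. Thus $\mathcal{K}$ is obtained from $\tilde{\mathcal{K}}$ by reindexing along the monotone surjection $i\mapsto t(\sigma_i)$ and keeping only the frames $K^{i_k}$.

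\emph{Step 2: interval decomposition of $\tilde{\mathcal{K}}$.} I would invoke the pairing from Algorithm~\ref{alg:MatrixReduction} and the Pairing Uniqueness Lemma. Each simplex is either positive or negative. The summands of $\mathsf H_p(\tilde{\mathcal K})$ are $\mathbb I[i,j)$ for persistence pairs $(\sigma_i,\sigma_j)$, and $\mathbb I[i,+\infty)$ for unpaired positive $\sigma_i$.

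\emph{Step 3: transfer.} Restricting a persistence module along the monotone map $k\mapsto i_k$ sends an interval $\mathbb I[i,j)$ to the set of $k$ with $i\le i_k<j$. That set is $\{k : t(\sigma_i)\le t_k<t(\sigma_j)\}$, since $i_k\ge i$ iff $t_k\ge t(\sigma_i)$, and likewise for $j$. So the restriction is $\mathbb I[t(\sigma_i),t(\sigma_j))$, which is empty exactly when $t(\sigma_i)=t(\sigma_j)$. Similarly, $\mathbb I[i,+\infty)$ goes to $\mathbb I[t(\sigma_i),+\infty)$. Restriction is additive on direct sums, so the interval decomposition of $\mathsf H_p(\mathcal K)$ is the image of that of $\mathsf H_p(\tilde{\mathcal K})$ with the empty summands discarded. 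By Krull--Schmidt (Azumaya) uniqueness of interval decompositions, this image is the decomposition, and the matching is read off directly. The matched summands are exactly those with $t(\sigma_i)<t(\sigma_j)$ or infinite death. The unmatched ones are those with $t(\sigma_i)=t(\sigma_j)$.

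\emph{Main obstacle.} The delicate point is justifying that $\mathsf H_p(\mathcal K)$ is literally the restriction of $\mathsf H_p(\tilde{\mathcal K})$ along $k\mapsto i_k$, with matching structure maps. This needs Step 1 to be done carefully: the frames must coincide as subcomplexes, not merely have equal cardinality, and the maps must be the inclusion-induced ones. Once that identification is in place, the rest is bookkeeping about how intervals restrict along a monotone reindexing.
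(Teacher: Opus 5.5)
Your proof is correct. The paper does not prove this lemma; it imports it from Oudot. What you wrote is the standard argument for it:
\begin{itemize}
\item identify each frame $K_{t_k}$ with the prefix $K^{i_k}$, as the same subcomplex;
\item note that the inclusions are then literally the same, so $\mathsf H_p(\mathcal K)$ is the restriction of $\mathsf H_p(\tilde{\mathcal K})$ along the monotone map $k\mapsto i_k$;
\item check that $\mathbb I[i,j)$ restricts to $\mathbb I[t(\sigma_i),t(\sigma_j))$, using $i_k\ge i \iff t(\sigma_i)\le t_k$, which in turn uses the initial-segment property;
\item finish with additivity of restriction and uniqueness of interval decompositions.
\end{itemize}

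Some small remarks:
\begin{itemize}
\item Step~2 is unnecessary. Since $H_p(K^0)=H_p(\emptyset)=0$, every interval summand of a module indexed by $\{0,\dots,m\}$ is automatically $\mathbb I[i,j)$ with $1\le i<j\le m$, or $\mathbb I[i,+\infty)$. So you need neither the persistence algorithm nor the Pairing Uniqueness Lemma, whose link to interval summands would itself need justification.
\item In Step~1, the phrase ``monotone surjection $i\mapsto t(\sigma_i)$'' is slightly off. If two consecutive frames coincide, some $t_k$ is not of the form $t(\sigma)$. The map you actually use in Step~3, $k\mapsto i_k$, is the right one.
\item You read the third bullet correctly, as describing the unmatched summands of $\mathsf H_p(\tilde{\mathcal K})$. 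As transcribed in the paper, it says $\mathsf H_p(\mathcal K)$, which is a typo. Your argument shows that every summand of $\mathsf H_p(\mathcal K)$ is matched.
\end{itemize}
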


\begin{theorem}
    The interval decomposition of $\mathsf{H}_*(\tilde{\mathcal{K}})$ is exactly composed of the intervals of the decomposition of $\mathsf{H}_*(\mathcal{K}')$ combined with 
    $\{\mathbb{I}[i, i+1) \mid i \in T\}$, where $T$ is the set of indices of tails of $F_f$ as in Equation \ref{eq:tails}.
\end{theorem}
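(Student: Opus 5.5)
We apply Lemma 2.12 of \cite{oudot} with $\mathcal{K} = \mathcal{K}'$ and with the colex order $\sigma_1, \dots, \sigma_N$ as the compatible total order, so that the induced simplex-wise filtration is exactly $\tilde{\mathcal{K}}$.

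\textbf{Appearance times.} First we compute the appearance function $t$ for $\mathcal{K}'$. A simplex $\sigma_i$ first appears in $K^{j}$ for the smallest $j \in J$ with $j \geq i$. If $i \notin T$ then $i \in J$ and $t(\sigma_i) = i$. If $i \in T$, then $i+1 \notin T$, because $\sigma_{i+1}$ is a head and, as $F_f$ is a discrete vector field, cannot also be a tail. Hence $t(\sigma_i) = i+1 = t(\sigma_{i+1})$.

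\textbf{Hypotheses of the lemma.} The colex order respects faces, and $t$ is non-decreasing in the index. So $t(\sigma_i) < t(\sigma_j)$ forces $i < j$, and the hypotheses of the lemma hold.

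\textbf{Classifying the summands of $\mathsf{H}_*(\tilde{\mathcal{K}})$.} The lemma matches every infinite summand $\mathbb{I}[i,+\infty)$, and every finite summand $\mathbb{I}[i,j)$ with $t(\sigma_i) < t(\sigma_j)$, bijectively with summands of $\mathsf{H}_*(\mathcal{K}')$. The remaining summands of $\mathsf{H}_*(\tilde{\mathcal{K}})$ are those $\mathbb{I}[i,j)$ with $t(\sigma_i) = t(\sigma_j)$, $i<j$. By the description of $t$, equality of appearance times for distinct indices $i<j$ happens exactly when $i \in T$ and $j = i+1$. Conversely, for each $i \in T$, Prop.~\ref{prop:covfProperties}(f) shows that $(\sigma_i, \sigma_{i+1})$ is a persistence pair of the simplex-wise filtration. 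So $\mathbb{I}[i,i+1)$ really is a summand. Therefore the unmatched summands are precisely $\{\mathbb{I}[i,i+1) \mid i \in T\}$.

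\textbf{Conclusion.} The matched summands account for the whole decomposition of $\mathsf{H}_*(\mathcal{K}')$, giving the stated decomposition. Matched summands are identified up to reindexing by $t$, and we read the statement in that sense, identifying the index $j \in J$ with the frame $K^j$.

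\textbf{Main obstacle.} The step needing care is that the lemma as quoted states unmatched summands on the $\tilde{\mathcal{K}}$ side, but says every summand of $\mathsf{H}_*(\mathcal{K}')$ is matched ``vice-versa''. I would verify from the proof in \cite{oudot} that the correspondence is surjective onto the decomposition of $\mathsf{H}_*(\mathcal{K})$. If the lemma's statement is ambiguous, a rank-counting backup works: each frame $K^j$, $j \in J$, equals the corresponding simplex-wise frame. So the persistent Betti numbers of $\mathcal{K}'$ are those of $\tilde{\mathcal{K}}$ restricted to indices in $J$. This exhibits the barcode of $\mathcal{K}'$ as the barcode of $\tilde{\mathcal{K}}$ with intervals clipped to $J$, and the intervals that vanish under clipping are exactly those $[i,j)$ containing no index of $J$, i.e., the $[i,i+1)$ with $i\in T$.
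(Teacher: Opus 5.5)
Your proposal is correct and follows the paper's proof. Both apply Oudot's Lemma 2.12 to $\mathcal{K}'$ with the colex order, compute $t(\sigma_i)=i+1$ for $i\in T$ and $t(\sigma_i)=i$ otherwise, identify the unmatched summands as exactly the $\mathbb{I}[i,i+1)$ with $i\in T$, and invoke Prop.~\ref{prop:covfProperties}(f) for the converse. Your extra checks fill in details the paper leaves implicit: that $i+1\notin T$ because a head cannot be a tail, that the lemma's hypotheses hold, and that the quoted third bullet should refer to $\mathsf{H}_p(\tilde{\mathcal{K}})$. Your restriction-to-$J$ backup argument also settles that last point.
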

\begin{proof}
    In the language of the above lemma, 
    $t(\sigma_i) = i+1$ if $i \in T$, and $t(\sigma_i) = i$ otherwise.
    So a summand $\mathbb{I}[i, j)$ of $\mathsf{H}_*(\tilde{\mathcal{K}})$ where $t(\sigma_i) = t(\sigma_j)$ means that $j = i+1$ and $i \in T$.
    Therefore, in the partial matching, every unmatched summand of $\mathsf{H}_p(\tilde{\mathcal{K}})$ is in fact $\mathbb{I}[i, i+1)$, corresponds to the arrow $\{\sigma_i \rightarrow \sigma_{i+1}\} \in F_f$.
    Moreover, by Prop.~\ref{prop:covfProperties} (f), every arrow of $F_f$ induces such a summand.
\end{proof}

In summary, the Morse complex of $F_f$ can be used to compute the persistent homology of the lower star filtration of $K$, with the arrows of $F_f$ recovering the lost pairs.

\section{Adjacent Vertex Swaps and Implementation}

In this section, we give the updates required for the discrete vector field when two vertices in an order function are swapped.

\subsection{Efficient Colex Vector Field Updates}
\label{subsec:fastUpdates}

Given a fixed simplicial complex $K$ and two arbitrary order functions $f$ and $g$ on $K$, one expects to find no similarities between the colex vector fields $F_f$ and $F_g$.
However, if $f$ and $g$ have similar behavior, one might expect similar behavior between $F_f$ and $F_g$.
In this vein, we describe how to update $F_f$ efficiently when the underlying order function $f$ changes slightly.

\begin{definition}
    Given a vertex set $V$ and an order function $f \colon V \rightarrow \{1, 2, \dots, |V|\}$, 
    we say that two vertices $x, y \in V$ are order-adjacent if $|f(x) - f(y)| = 1$.
\end{definition}

We consider the case where the order function changes via a swap of two order-adjacent vertices as follows.
Let $x$ and $y$ be order-adjacent vertices where $x$ comes first in the order of $f$: that is, $f(y) = f(x) + 1$.
Suppose $f$ is the first order function, and $f'$ is the order function after the swap of $x$ and $y$: in particular, for any $v \in V$,

\begin{equation}
\label{eq:fPrime}
f'(v) \coloneq \begin{cases} 
      f(x) & v = y \\
      f(y) & v = x \\
      f(v) & v \notin \{x, y\}.
   \end{cases}
\end{equation}
We also compute the difference between the two orders:
\[
(f'-f)(v) \coloneq \begin{cases} 
      -1 & v = y \\
      1 & v = x \\
      0 & v \notin \{x, y\}.
   \end{cases}
\]

\begin{lemma}
\label{lem:swappingLemma}
For any $a, b \in V$, $f(a) < f(b)$ and $f'(a) > f'(b)$ iff $a = x, b = y$.
\end{lemma}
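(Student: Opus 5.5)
The plan is to prove the two directions separately, using only the explicit formula for $f'$ in Equation~\eqref{eq:fPrime}, the assumption $f(y) = f(x)+1$, and the injectivity of $f$ and $f'$.

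\emph{Reverse direction.} If $a = x$ and $b = y$, then $f(a) = f(x) < f(x)+1 = f(y) = f(b)$. Also $f'(a) = f'(x) = f(y) > f(x) = f'(y) = f'(b)$, so both inequalities hold.

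\emph{Forward direction.} Suppose $f(a) < f(b)$ and $f'(a) > f'(b)$. In particular $a \neq b$. I would split into cases according to how many of $a, b$ lie in $\{x,y\}$.

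\begin{itemize}
\item If neither lies in $\{x,y\}$, then $f' = f$ on both points. The two inequalities then contradict each other.
\item If exactly one lies in $\{x,y\}$, say $u \in \{x,y\}$ and $w \notin \{x,y\}$, then $f(w) \notin \{f(x), f(y)\}$. Since $f(x)$ and $f(y)$ are consecutive integers, $f(w)$ lies either strictly below both or strictly above both. Because $f'(u) \in \{f(x), f(y)\}$ and $f'(w) = f(w)$, the comparison between $u$ and $w$ is the same under $f$ and under $f'$. This contradicts the assumption that the order flips.
\item If both lie in $\{x,y\}$, then $\{a,b\} = \{x,y\}$. The choice $a = y$, $b = x$ would give $f(a) = f(y) > f(x) = f(b)$, contradicting $f(a) < f(b)$. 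Hence $a = x$ and $b = y$.
\end{itemize}

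The only step that needs care is the mixed case. There the argument depends on order-adjacency: no vertex value lies strictly between $f(x)$ and $f(y)$, so exchanging the two values cannot change how either compares with a third vertex. The rest is routine checking.
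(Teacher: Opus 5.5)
Your proof is correct and is essentially the paper's argument. The paper rules out $a\notin\{x,y\}$ because it would force $f'(b)<f(a)<f(b)$ with $f(b)-f'(b)\le 1$, leaving no integer room, handles $b$ symmetrically, and finishes with the same check on $\{a,b\}=\{x,y\}$; your mixed case uses the same order-adjacency/integrality fact, phrased as ``no vertex value lies strictly between $f(x)$ and $f(y)$.''
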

\begin{proof}
For the forward direction, suppose on the contrary that $a \notin \{x, y\}.$
Then $f(a) = f'(a)$, so we have ${f'(b) < f(a) < f(b)}$. 
Thus $0 < f(a) - f'(b) < f(b) - f'(b) \leq 1$.
However, no integer value for $f(a) - f'(b)$ exists to satisfy this, which is a contradiction, so $a \in \{x, y\}$. 
A similar argument shows $b \in \{x, y\}$, and a simple case analysis finishes the proof. The reverse direction is easily checked by computation.
\end{proof}

Recall that for any $\sigma \in K$, its candidate list $V_\sigma$ (Definition \ref{def:candidates}) does not depend on the order function, only the underlying complex $K$. 
On the other hand, for a fixed ${\sigma \in K}$, the candidate vertices ${c_f(\sigma) \coloneq \argmin_{v\in V_\sigma} f(v)}$ and ${c_{f'}(\sigma) \coloneq \argmin_{v\in V_\sigma} f'(v)}$ may differ.
Recall that the colex vector field is uniquely determined by the candidate vertex of each simplex.
In order to determine what changes from $F_f$ to $F_{f'}$, we will first determine which candidate vertices will change.

\begin{lemma}
\label{lem:vSigmaSwap}
Let $x$ and $y$ be order-adjacent vertices of $f$ with $f(y) = f(x) + 1$, and let $f'$ be the order function after their swap, as in Eq.~\ref{eq:fPrime}.
Then $c_{f}(\sigma) \neq c_{f'}(\sigma)$ iff $c_{f}(\sigma) = x$ and $c_{f'}(\sigma) = y$.
In particular, if $c_{f}(\sigma) \neq c_{f'}(\sigma)$, then $x, y \in V_\sigma$.

\end{lemma}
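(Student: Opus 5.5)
The plan is to compare the two argmins directly, using Lemma~\ref{lem:swappingLemma} as the only real input. The candidate list $V_\sigma$ is the same for $f$ and $f'$, since it depends only on $K$. Write $a \coloneq c_f(\sigma)$ and $b \coloneq c_{f'}(\sigma)$, and suppose $a \neq b$.

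The first step is to extract two strict inequalities. Because $a$ minimizes $f$ on $V_\sigma$, $f$ is injective, and $b \in V_\sigma$ with $b \neq a$, we get $f(a) < f(b)$. Symmetrically, because $b$ minimizes $f'$ on $V_\sigma$, $f'$ is injective, and $a \in V_\sigma$, we get $f'(b) < f'(a)$, that is, $f'(a) > f'(b)$. Lemma~\ref{lem:swappingLemma} then gives $a = x$ and $b = y$ immediately, which is the forward direction. The reverse direction is trivial: if $c_f(\sigma) = x$ and $c_{f'}(\sigma) = y$, then the two candidate vertices differ because $x \neq y$.

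For the ``in particular'' clause, note that candidate vertices always lie in the candidate list. So if they differ, then $x = c_f(\sigma) \in V_\sigma$ and $y = c_{f'}(\sigma) \in V_\sigma$.

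There is no genuine obstacle here. The only point needing care is that both inequalities are strict, which relies on $f$ and $f'$ being injective order functions; that strictness is exactly what lets Lemma~\ref{lem:swappingLemma} apply.
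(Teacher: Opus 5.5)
Your proposal is correct and follows the paper's proof: you derive $f(c_f(\sigma)) < f(c_{f'}(\sigma))$ and $f'(c_f(\sigma)) > f'(c_{f'}(\sigma))$, apply Lemma~\ref{lem:swappingLemma}, and get the final clause from $c_f(\sigma), c_{f'}(\sigma) \in V_\sigma$. You also spell out two points the paper leaves implicit, namely that injectivity of $f$ and $f'$ makes the inequalities strict, and that the reverse direction holds because $x \neq y$.
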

\begin{proof}
If $c_{f}(\sigma) \neq c_{f'}(\sigma)$, then we have $f(c_{f}(\sigma)) < f(c_{f'}(\sigma))$ and $f'(c_{f}(\sigma)) > f'(c_{f'}(\sigma))$.
Apply Lemma \ref{lem:swappingLemma}.
For the final statement, recall that $c_f$ and $c_{f'}$ are chosen from $V_\sigma$ (Definition \ref{def:candidates}). 
\end{proof}


Therefore, if we want to determine which candidate vertices will change, we are only interested in $\sigma$ such that $x, y \in V_\sigma$.
By Lemma \ref{lem:vSigmaStar}, this is equivalent to ${\sigma \in \overline \St(x) \cap \overline \St(y)}$.
Moreover, if $c_{f}(\sigma) = x$, then this change in candidate vertices is forced.

\begin{lemma}
\label{lem:forced}
Let $x$ and $y$ be order-adjacent vertices of $f$ with $f(y) = f(x) + 1$, and let $f'$ be the order function after their swap, as in Eq.~\ref{eq:fPrime}.
Suppose $\sigma \in K$ with $x, y \in V_\sigma$. 
Then $c_{f}(\sigma) = x$ iff $c_{f'}(\sigma) = y$.
\end{lemma}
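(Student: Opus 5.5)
We prove the two directions separately, using only the definition of the candidate vertex as the $\argmin$ of the order function over $V_\sigma$ together with Eq.~\ref{eq:fPrime}. The hypothesis $x, y \in V_\sigma$ ensures that both $x$ and $y$ are eligible to be the candidate vertex under either order function.

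For the forward direction, suppose $c_f(\sigma) = x$. Then $f(x) < f(v)$ for every $v \in V_\sigma \setminus \{x\}$. We check that $y$ minimizes $f'$ on $V_\sigma$; since $y \in V_\sigma$, there are two kinds of competitors.
\begin{itemize}
\item For $v = x$, we have $f'(y) = f(x) < f(y) = f'(x)$.
\item For $v \in V_\sigma \setminus \{x,y\}$, we have $f'(v) = f(v) > f(x) = f'(y)$.
\end{itemize}
Hence $c_{f'}(\sigma) = y$. Alternatively, this direction follows quickly from Lemma~\ref{lem:vSigmaSwap}: since $f'$ gives $y$ the value $f(x)$, which is strictly below every other value of $f$ on $V_\sigma$, the candidate vertex must change, and Lemma~\ref{lem:vSigmaSwap} then forces $c_{f'}(\sigma) = y$.

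For the reverse direction, suppose $c_{f'}(\sigma) = y$, so $f'(y) = f(x) < f'(v)$ for every $v \in V_\sigma \setminus \{y\}$. Again we verify that $x$ minimizes $f$ on $V_\sigma$.
\begin{itemize}
\item For $v = y$, we have $f(x) < f(y)$ because $f(y) = f(x)+1$.
\item For $v \in V_\sigma \setminus \{x,y\}$, we have $f(v) = f'(v) > f'(y) = f(x)$.
\end{itemize}
Thus $c_f(\sigma) = x$. This direction is exactly the forward argument with the roles of $(f,x)$ and $(f',y)$ exchanged.

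There is no real obstacle. The only point requiring care is that $x$ and $y$ both lie in $V_\sigma$, so that comparing against them is legitimate; this is exactly the hypothesis. We also use that $f$ and $f'$ are injective, so the minimizers are unique.
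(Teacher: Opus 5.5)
Your proof is correct and takes the same approach as the paper. The paper notes that if $x$ minimizes $f$ on $V_\sigma$, then $y$ attains the second-lowest value of $f$ there and hence the lowest value of $f'$, with the converse by symmetry; you write out the same comparisons explicitly against $x$ and against the vertices in $V_\sigma \setminus \{x,y\}$.
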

\begin{proof}
If $c_{f}(\sigma) = x$, then the second-lowest value of $f$ is achieved by $y$.
Therefore it must achieve the lowest value of $f'$.
The reverse direction is the same.    
\end{proof}










At this point, one wishes to find all $\sigma \in \overline \St(x) \cap \overline \St(y)$ with $c_f(\sigma) = x$, and then assign $c_{f'}(\sigma) = y$.
In theory, storing $c_f(\sigma)$ for every $\sigma \in K$ and updating every single one after each vertex swap is possible, but this information is not directly encoded in $F_f$.
For example, if $\sigma$ is not part of any arrow of $F_f$, we  know that $c_f(\sigma) \in \sigma$, but we do not know its identity unless we examined all vertices of $\sigma$.

The remainder of this subsection instead performs a case analysis to speed up this process.
Fix $\sigma \in \overline \St(x) \cap \overline \St(y).$
Since we are updating $F_f$, we can assume that we already know the arrow $\sigma \rightarrow \tau$ (if it exists) from the function $f$. 
This provides us with three potential cases:

\begin{enumerate}[label=(\alph*)]
    \item $\sigma$ is not the tail of any arrow in $F_f$, or equivalently, $c_f(\sigma) \in \sigma$.
    
    \item $\tau = x \sigma$, or equivalently, $c_f(\sigma) = x$ and $x \notin \sigma$.
    
    \item $\tau = v \sigma$  with $v \neq x$, or equivalently, $c_f(\sigma) \neq x$, $c_f(\sigma) \notin \sigma$.

\end{enumerate}

In Cases (b) and (c), $\tau$ identifies $c_f(\sigma)$, so the above lemmas describe exactly what must happen.
In Case (b), $c_{f'}(\sigma) = y$ by Lemma \ref{lem:forced}.
So, the arrow $\{\sigma \rightarrow x\sigma\}$ must get deleted from $F_f$.
Then, if $y \notin \sigma$, we include the arrow $\{\sigma \rightarrow y \sigma\}$ in $F_{f'}$. 
In Case (c), no change to the arrow will occur due to Lemma \ref{lem:vSigmaSwap}.

One possible way to handle Case (a) is to simply compute $c_f(\sigma)$, determine if it is $x$, and follow the above methods.
However, we note an optimization that may arise from finding $\overline \St(x) \cap \overline \St(y)$.
In particular, since Lemma \ref{lem:closedStarPartition} implies that 
\[
\overline \St(x) \cap \overline \St(y) = 
(\St(x) \sqcup \Lk(x)) \cap (\St(y) \sqcup \Lk(y)),
\]
when $\sigma \in \St(x) \cap \overline \St(y)$ is found, we may keep track of whether it was in $\St(x)$ or $\Lk(x)$, and similarly whether it was in $\St(y)$ or $\Lk(y)$.

\begin{lemma}
    \label{lem:caseA}
    In Case (a), the arrow $\{\sigma \rightarrow y \sigma\}$ appears in $F_{f'}$ if and only if ${\sigma \in \St(x) \cap \Lk(y)}$ and $\omega(\sigma) = f(x)$.
\end{lemma}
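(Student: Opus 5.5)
We work in the setting fixed just before the lemma: $\sigma \in \overline \St(x) \cap \overline \St(y)$, so $x, y \in V_\sigma$ by Lemma \ref{lem:vSigmaStar}, and we are in Case (a), i.e.\ $c_f(\sigma) \in \sigma$. By Definition \ref{def:covf}, the arrow $\{\sigma \rightarrow y\sigma\}$ lies in $F_{f'}$ exactly when $c_{f'}(\sigma) = y$ and $y \notin \sigma$. The plan is to translate these two conditions, under the Case (a) hypothesis, into the stated membership and appearance conditions.

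\textbf{Forward direction.} Assume $c_{f'}(\sigma) = y$ and $y \notin \sigma$.
\begin{enumerate}[label=(\roman*)]
\item Since $y \in V_\sigma$ but $y \notin \sigma$, and $\sigma \in \overline \St(y)$, Lemma \ref{lem:closedStarPartition}(b) gives $\sigma \in \Lk(y)$.
\item Because $c_{f'}(\sigma) = y \neq c_f(\sigma)$ (the latter lies in $\sigma$ while $y$ does not), Lemma \ref{lem:vSigmaSwap} forces $c_f(\sigma) = x$.
\item Since $c_f(\sigma) \in \sigma$, this gives $x \in \sigma$, hence $\sigma \in \St(x)$.
\item Every vertex of $\sigma$ lies in $V_\sigma$, and $x$ minimizes $f$ on $V_\sigma$. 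As $x \in \sigma$, we get $\omega(\sigma) = \min_{v \in \sigma} f(v) = f(x)$.
\end{enumerate}

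\textbf{Reverse direction.} Assume $\sigma \in \St(x) \cap \Lk(y)$ and $\omega(\sigma) = f(x)$.
\begin{enumerate}[label=(\roman*)]
\item From $\sigma \in \Lk(y)$ we get $y \notin \sigma$; from $\sigma \in \St(x)$ we get $x \in \sigma$.
\item Case (a) says $c_f(\sigma) \in \sigma$, so $f(c_f(\sigma)) \geq \omega(\sigma) = f(x)$.
\item Since $x \in \sigma \subseteq V_\sigma$, minimality of the candidate gives $f(c_f(\sigma)) \leq f(x)$. Hence $f(c_f(\sigma)) = f(x)$, and injectivity of $f$ gives $c_f(\sigma) = x$.
\item Lemma \ref{lem:forced} (applicable since $x, y \in V_\sigma$) then gives $c_{f'}(\sigma) = y$. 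Together with $y \notin \sigma$, Definition \ref{def:covf} places $\{\sigma \rightarrow y\sigma\}$ in $F_{f'}$.
\end{enumerate}

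\textbf{Main point of care.} The key step is recognizing that in Case (a), the condition $c_f(\sigma) = x$ is equivalent to $x \in \sigma$ together with $\omega(\sigma) = f(x)$. One direction of this uses the fact that $c_f(\sigma) \in \sigma$; the other uses that $\sigma \subseteq V_\sigma$. The remaining checks are bookkeeping with Lemma \ref{lem:closedStarPartition}, to convert between $x \in \sigma$ and $\St(x)$, and between $y \notin \sigma$ and $\Lk(y)$.
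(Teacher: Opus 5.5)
Your proof is correct and follows essentially the same route as the paper: you unpack Definition \ref{def:covf}, use the swap lemmas to trade $c_{f'}(\sigma)=y$ for $c_f(\sigma)=x$, and use Lemma \ref{lem:closedStarPartition} to turn $x\in\sigma$, $y\notin\sigma$ into $\sigma\in\St(x)\cap\Lk(y)$. The differences are minor. In the forward direction you invoke Lemma \ref{lem:vSigmaSwap} where the paper uses Lemma \ref{lem:forced}. In the converse you explicitly use the Case (a) hypothesis $c_f(\sigma)\in\sigma$ to deduce $c_f(\sigma)=x$ from $\omega(\sigma)=f(x)$; the paper leaves this step implicit, and it genuinely requires Case (a).
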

\begin{proof}
Suppose $\{\sigma \rightarrow y \sigma\}$ is in $F_{f'}$. 
Then by Definition \ref{def:covf}, $c_{f'}(\sigma) = y$, and $y \notin \sigma$.
Since $\sigma \in \overline \St(x) \cap \overline \St(y)$,
Lemma \ref{lem:forced} implies that $c_f(\sigma) = x$.
Since we are in Case (a), the lack of arrow with tail $\sigma$ in $F_f$ implies $x \in \sigma$.
In particular, the earliest vertex of $\sigma$ is $x$, and therefore $\omega(\sigma) = f(x)$.
Additionally, applying Lemma \ref{lem:closedStarPartition} to both $x$ and $y$ yields $\sigma \in \St(x) \cap \Lk(y)$.

For the converse, the statement $\sigma \in \St(x) \cap \Lk(y)$ implies that $x \in \sigma$ and $y \notin \sigma$.
Since $\omega(\sigma) = f(x)$, $c_f(\sigma) = x$. 
By Lemma \ref{lem:forced}, $c_{f'}(\sigma) = y \notin \sigma$, so the arrow $\{\sigma \rightarrow y \sigma\}$ appears in $F_{f'}$.
\end{proof}
In particular, if $\sigma \in \overline \St(x) \cap \overline \St(y)$ is located outside of $\St(x) \cap \Lk(y)$, no additional computation is needed, as no arrow will appear.
However, if $\sigma \in \St(x) \cap \Lk(y)$, it is necessary to spend extra time evaluating $\omega(\sigma)$.

The entire process is summarized in Algorithm \ref{alg:covf_update}.
It takes inputs of the simplicial complex $K$, an order function $f$, two order-adjacent vertices $x, y$ with $f(y) = f(x) + 1$, and the colex vector field $F_f$.
The algorithm outputs the colex vector field $F_{f'}$ where $f'$ is the order function after the swap of $x$ and $y$.

\begin{algorithm}[h!]
\caption{Colex vector field update for order-adjacent vertex swaps}\label{alg:covf_update}
\begin{algorithmic}[1]
    \State $F = F_f$;
    \ForAll{$\sigma \in \overline \St(x) \cap \overline \St(y)$}
        \If{$\{\sigma \rightarrow \tau\} \in F$ for some $\tau \in K$}
            \If{$\tau = x\sigma$}
                \State remove arrow $\{\sigma \rightarrow \tau\}$ from $F$
                \If{$y \notin \sigma$}
                    \State add arrow $\{\sigma \rightarrow y\sigma\}$ to $F$
                \EndIf
            \EndIf
        \ElsIf{$\sigma \in \St(x) \cap \Lk(y)$}
            \State $\omega = \min\{f(v) \mid v \in \sigma\}$
            \If{$\omega = f(x)$}
                \State add arrow $\{\sigma \rightarrow y\sigma\}$ to $F$
            \EndIf
        \EndIf
    \EndFor
\end{algorithmic}
\end{algorithm}

\subsection{Implementation of $F_f$, Structure of Path Matrices and Boundary Matrices}
\label{subsec:matrices}

For complexity analysis, we assume that locating a simplex $\sigma \in K$ occurs in constant time.
By storing information about arrows including $\sigma$ in the same location, 
we can also look up whether $\sigma$ is a head, tail, or critical in constant time.
Finally, we assume that we can evaluate $f$ on vertices in constant time.

Given a simplicial complex and an order function $f$ on its vertices, we must construct $F_f$, identify its critical cells, and compute the number of paths between pairs of critical cells.
We wish for this information to be compatible with Section \ref{subsec:fastUpdates}, in the sense that recomputing it after a vertex swap is efficient.
After a vertex swap, the set of critical simplices may change wildly, and we will have to compute the number of paths between them.
In our implementation, we instead record the number of gradient paths between \textit{any} two simplices of adjacent dimensions, storing them in ``path matrices".
These matrices do not record the actual paths taken, only the total amount.
We describe the process used to construct $F_f$ and the relevant path matrices first.

Let $V = \{ v_1, v_2, \dots, v_n\}$, where $f(v_i) = i$, taking $O(n \log n)$ time to sort them if needed.
Additionally, let $m$ be the number of simplices of $K$, $m_p$ be the number of $p$-dimensional simplices of $K$, and let $d \coloneq \max_{\sigma \in K} \dim(\sigma)$ be the dimension of $K$.
To create $F_f$, we iterate through all simplices $\sigma$ in $K$.
For a given $\sigma$, we calculate $c_f(\sigma)$: if we go through $V$ in order, then $c_f(\sigma)$ is the first vertex $v_i$ for which $\{v_i\} \cup \sigma$ is in $K$.
We then determine if $c_f(\sigma)$ is in $\sigma$, which takes at most $O(d)$ time.
If it is, then $\sigma$ is not the tail of an arrow, and we may move to the next simplex.
Otherwise, we include the arrow $\{\sigma \rightarrow v_i \sigma\}$ into $F_f$.
We store that $\sigma$ corresponds to a head of $v_i \sigma$, and  we store that $v_i \sigma$ corresponds to a tail of $\sigma$.
This process takes $O(dn)$ time for each $\sigma$, for a total of $O(dmn)$.

Recall that $\Gamma(\tau, \sigma)$ is the number of $F$-paths from $\tau$ to $\sigma$.
For each dimension $p$, we will keep a path matrix that stores $\Gamma(\tau^{(p+1)}, \sigma^{(p)})$ for any pair of simplices of those dimensions.
We construct matrices $\PP =\{P_0, P_1, \dots, P_d\}$, where each $P_p$ has rows indexed by the $p$-dimensional simplices $\{\sigma_i^{(p)}\}$ and columns indexed by $\{\tau_j^{(p+1)}\}$.
We wish for the entry corresponding to the row of $\sigma$ and the column of $\tau$, denoted $P_p[\sigma, \tau]$ to have value $\Gamma(\tau,\sigma)$.
To compute this, we will run through every arrow one at a time, adjusting the relevant values of $P_p$ until all of $F_f$ has been accounted for.
We think of this as starting with an empty discrete vector field and adding one arrow at a time, changing the vector field slightly, until we have reached $F_f$.

Before we have added any arrows, the only possible $F_f$-paths are the face relations $\beta > \alpha$, so our path matrices are simply the standard boundary matrices of $K$. 
We note that the rows and columns of $\PP$ do not need to be in colex order; this will be done on the boundary matrices of the Morse complex.

As we change our vector field in the future, we will see that updating $\PP$ relies on its acyclic nature.
We wish to ensure that after adding or deleting arrows, it remains an acyclic discrete vector field, leading to the following definition.

\begin{definition}
    Suppose that we wish to add an arrow $\alpha \rightarrow \beta$ to an acyclic discrete vector field $F$, changing it from $F$ to $F'$.
    We say that this occurs \textbf{naturally} if ${F' = F \sqcup \{\alpha \rightarrow \beta\}}$ and $F'$ is acyclic.   

    A deletion of $\alpha \rightarrow \beta$, changing $F'$ to $F$, occurs \textbf{naturally} if $F'$ can be obtained by naturally adding $\alpha \rightarrow \beta$ to $F$.
    We note that the acyclic condition is implicit for deletions: if $F'$ is acyclic, $F$ is too.
    

\end{definition}

\begin{prop}
\label{prop:countingAddition}
    Suppose the arrow $\alpha^{(p)} \rightarrow \beta^{(p+1)}$ is added to $F_f$ naturally.
    Then no new $F_f$-paths ending at $\alpha$ are created, and no new $F_f$-paths starting at $\beta$ are created.
    Additionally, for any $\tau^{(p+1)} \neq \beta$ and $\sigma^{(p)} \neq \alpha$,
    the number of $F_f$-paths from $\tau$ to $\sigma$ increases by $\Gamma(\tau, \alpha) \Gamma(\beta, \sigma)$.
\end{prop}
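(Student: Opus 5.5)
Write $F$ for the field before the addition and $F' = F \sqcup \{\alpha \rightarrow \beta\}$ for the field after, with $\Gamma$ and $\Gamma'$ counting $F$-paths and $F'$-paths respectively. The plan is to compare $F$-paths with $F'$-paths directly. Every $F$-path is an $F'$-path. A path of dimension $(p+1,p)$ that is an $F'$-path but not an $F$-path must use the new arrow: somewhere it contains the step $\alpha \rightarrow \beta$.

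\textbf{First claim (no new paths ending at $\alpha$ or starting at $\beta$).} Suppose a new $F'$-path ends at $\alpha$. It passes through $\alpha$ at the step $\alpha \rightarrow \beta$, and afterwards it continues from $\beta$ and returns to $\alpha$. The segment from that occurrence of $\alpha$ to the final $\alpha$ is then a nontrivial closed $F'$-path, because it contains at least the arrow $\alpha\rightarrow\beta$. This contradicts acyclicity of $F'$, which is part of the definition of a natural addition. The same argument applies to a new path starting at $\beta$. Such a path uses $\alpha \rightarrow \beta$ at some later point, so it leaves $\beta$ and later re-enters $\beta$ via $\alpha \rightarrow \beta$. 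That gives a closed path $\alpha \rightarrow \beta > \cdots \rightarrow \alpha$ if we read it from the occurrence of $\alpha$ back through the segment starting at $\beta$. I would phrase this carefully: the portion from the first $\beta$ to the $\alpha$ preceding the later $\beta$, together with the arrow $\alpha\rightarrow\beta$, closes up.

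\textbf{Second claim (the count).} Fix $\tau^{(p+1)} \neq \beta$ and $\sigma^{(p)} \neq \alpha$. The same acyclicity argument shows that any $F'$-path uses the arrow $\alpha \rightarrow \beta$ at most once, since two uses would produce a closed path through $\alpha$. So a new path from $\tau$ to $\sigma$ decomposes uniquely as follows:
\begin{itemize}
\item an initial segment from $\tau$ to $\alpha$, ending just before the arrow;
\item the arrow $\alpha \rightarrow \beta$;
\item a final segment from $\beta$ to $\sigma$.
\end{itemize}
Neither segment contains the new arrow, so each is an $F$-path. Conversely, take any $F$-path from $\tau$ to $\alpha$ and any $F$-path from $\beta$ to $\sigma$. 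Concatenating them through $\alpha \rightarrow \beta$ gives a sequence satisfying Definition~\ref{def:FPath}, so it is an $F'$-path. The alternation of dimensions matches, and consecutive distinctness is automatic because $\sigma \neq \alpha$ and $\tau\neq\beta$ handle the endpoints. The resulting path is new because it uses the arrow. This gives a bijection between new paths and pairs of paths, so the count increases by exactly $\Gamma(\tau,\alpha)\Gamma(\beta,\sigma)$. Over $\Z_2$ one reads this modulo 2, but the argument counts paths over $\mathbb{Z}$.

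\textbf{Main obstacle.} The delicate point is the conventions for trivial paths and for paths that start at a $(p+1)$-simplex and end at a $p$-simplex. When $\tau = \beta$ or $\sigma = \alpha$ the decomposition degenerates, which is why the statement excludes these cases. I would also check that the first and last steps of the concatenated sequence are face relations or arrows as Definition~\ref{def:FPath} requires. In particular, an $F$-path from $\tau$ to $\alpha$ must end with a face relation $\beta_r > \alpha$, and the condition $\alpha_i \neq \alpha_{i+1}$ is preserved at the junction because $\alpha \neq$ the next $p$-face after $\beta$. That last inequality holds since the next face is a different codimension-1 face of $\beta$, as in an $F$-path.
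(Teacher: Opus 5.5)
Your structure matches the paper's proof. Every new path uses $\alpha\rightarrow\beta$. Acyclicity of $F'$ rules out new paths into $\alpha$ or out of $\beta$. For $\tau\neq\beta$, $\sigma\neq\alpha$, the new paths correspond bijectively to pairs of old paths glued along the new arrow. Your remark that the arrow is used at most once, which pins down the split point, is a detail the paper only asserts.

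The gap is in the converse direction, where you must check that a glued sequence really is an $F'$-path. The only condition of Definition~\ref{def:FPath} not inherited from the two pieces is $\phi\neq\alpha$, where $\beta>\phi$ is the first step of the $F$-path from $\beta$ to $\sigma$. Your reason for this (``the next face is a different codimension-1 face of $\beta$, as in an $F$-path'') is circular. In $F$ the cell $\beta$ is critical, so nothing in the definition of an $F$-path forbids the first step $\beta>\alpha$, and $\sigma\neq\alpha$ alone does not settle it. What is missing is the other half of ``natural'', which you never use. Since $F'=F\sqcup\{\alpha\rightarrow\beta\}$ is a discrete vector field, $\alpha$ and $\beta$ lie in no arrow of $F$. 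Hence an $F$-path that steps to $\alpha$ cannot continue past it, so $\sigma\neq\alpha$ forces $\phi\neq\alpha$. Without this, the glued sequence could contain the forbidden pattern $\alpha\rightarrow\beta>\alpha$. This is exactly the point the paper's proof makes (``neither $\alpha$ nor $\beta$ were part of an arrow before'').

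A smaller issue, which the paper's proof also passes over, concerns your closing-up argument for new paths starting at $\beta$. It fails when the path begins $\beta>\alpha\rightarrow\beta>\cdots$, because $\alpha\rightarrow\beta>\alpha$ is not a nontrivial closed path: it violates $\alpha_i\neq\alpha_{i+1}$. To exclude this case you must fix the convention you flagged as the main obstacle. A path leaving the head $\beta$ should be read as continuing the arrow $\alpha\rightarrow\beta$, so its first face must differ from $\alpha$. With that convention your loop is a genuine nontrivial closed $F'$-path, and the first claim follows.
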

\begin{proof}
    We begin by noting that no $F_f$-paths are destroyed.
    Some paths may have used the relation $\beta > \alpha$, and they are still free to do so: the arrow $\alpha \rightarrow \beta$ does not prevent this face relation from being used.

    If a new path is constructed, it must use the arrow $\alpha^{(p)} \rightarrow \beta^{(p)}$.
    Let $\tau^{(p+1)}$ and $\sigma^{(p)}$ be arbitrary simplices of those dimensions.
    Then any new path from $\tau$ to $\sigma$ is of the form
    \[
    \tau > \cdots > \alpha \rightarrow \beta > \gamma \rightarrow \dots > \sigma,
    \]
    where $\gamma \neq \alpha$ by Definition \ref{def:FPath}.

    Any new paths that end at $\alpha$ must include $\alpha \rightarrow \beta > \cdots > \alpha$, but due to the acyclic nature of $F_f$, this cannot exist.
    Similarly, any new paths from $\beta$ would include $\beta > \cdots > \alpha \rightarrow \beta$, so they cannot exist either.
    Hence $\Gamma(\cdot, \alpha)$ and $\Gamma(\beta, \cdot)$ remain constant.
    
    Now assume $\tau \neq \beta$ and $\sigma \neq \alpha$.
    In this case, the new path uniquely corresponds to the concatenation of an existing path from $\tau$ to $\alpha$ and an existing path from $\beta$ to $\sigma$.
    Moreover, any two such paths can be concatenated along $\alpha \rightarrow \beta$, forming a new path.
    Suppose ${\tau > \cdots \rightarrow \theta > \alpha}$ and $\beta > \phi \rightarrow \dots > \sigma$ are two such paths.
    The only way their concatenation might fail to be an $F_f$-path is if 
    $\theta = \beta$ or $\phi = \alpha$.
    However, since $\alpha \rightarrow \beta$ was added naturally, neither $\alpha$ nor $\beta$ were part of an arrow before, so this is impossible.
    Hence the total number of new paths is the product of these, which is exactly $\Gamma(\tau, \alpha) \Gamma(\beta, \sigma)$.
\end{proof}

We mention that $\alpha^{(p)} \rightarrow \beta^{(p+1)}$ only affects $P_p$, as using that arrow immediately implies which dimensions a new $F_f$-path must alternate between.
To track this efficiently in $P_p$, we define the column vector 
\[
\Vec{w} \coloneq  \Vec{P_p}[\cdot, \beta] - 
P_p[\alpha, \beta]\Vec{e_\alpha},
\]
where $\Vec{e_\alpha}$ is the standard basis vector with a 1 in the position of the row corresponding to $\alpha$; in other words, $\Vec{w}$ is simply the column of $\beta$ with a 0 in the row corresponding to $\alpha$.
Then, to each column $\Vec{P_p}[\cdot, \tau]$ with $\tau \neq \beta$, we add $P_p[\alpha, \tau]\Vec{w}$.
This causes each $P_p[\sigma, \tau]$ to increase by $P_p[\alpha, \tau] P_p[\sigma, \beta]$, unless $\sigma = \alpha$ or $\tau = \beta$.

Adding $\alpha^{(p)} \rightarrow \beta^{(p+1)}$ incurs a cost of $O(m_p m_{p+1})$ to update $P_p$.
The process of adding all arrows is therefore $O(n M^2)$, where $M = \max_p\{ m_p\}$.
This does raise the question about the order in which arrows must be added.
Thankfully, any order will suffice.

\begin{lemma}
    If $F$ is a subfield of an acyclic discrete vector field $F'$, 
    then adding the arrows of $F' \setminus F$ to $F$ in any order will be done naturally for each arrow.
\end{lemma}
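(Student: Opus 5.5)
Fix an arbitrary enumeration $a_1, a_2, \dots, a_k$ of the arrows of $F' \setminus F$. Set $F_0 \coloneq F$ and $F_i \coloneq F_{i-1} \cup \{a_i\}$, so that $F_k = F'$. The proof is by induction on $i$: I will show that the step from $F_{i-1}$ to $F_i$ is a natural addition. By the definition of a natural addition, this amounts to two checks. First, $F_i$ is a disjoint union $F_{i-1} \sqcup \{a_i\}$ that is again a discrete vector field, meaning neither endpoint of $a_i$ already lies in an arrow of $F_{i-1}$. Second, $F_i$ is acyclic.

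The key observation is that every intermediate field is a subfield of $F'$: for each $i$ we have $F \subseteq F_i \subseteq F'$. Both checks follow from this.

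For disjointness, write $a_i = \{\alpha \rightarrow \beta\}$. Every arrow of $F_{i-1}$ is an arrow of $F'$, and $a_i$ is not in $F_{i-1}$ since the $a_j$ are distinct and lie outside $F$. Because $F'$ is a discrete vector field, no simplex lies in two of its arrows. Hence neither $\alpha$ nor $\beta$ appears in any arrow of $F_{i-1}$, so $F_i$ is a discrete vector field and the union is disjoint.

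For acyclicity, suppose $F_i$ has a nontrivial closed $F_i$-path $\alpha_0 \rightarrow \beta_0 > \alpha_1 \rightarrow \cdots > \alpha_{r+1} = \alpha_0$. Each pair $\{\alpha_j, \beta_j\}$ is in $F_i \subseteq F'$, and the face relations and the conditions $\alpha_j \neq \alpha_{j+1}$ do not depend on the vector field. The same sequence is therefore a nontrivial closed $F'$-path, which contradicts the acyclicity of $F'$. This is the same restriction argument used earlier for the $F^j_f$. Note that the induction hypothesis is not really needed here: each step is natural on its own, because all intermediate fields sit between $F$ and $F'$.

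I do not expect a real obstacle. The only point needing care is to check the exact wording of ``naturally''. It requires the disjoint union, which is guaranteed by the matching property of $F'$ and not merely by $a_i \notin F_{i-1}$. It also requires acyclicity of the result. Acyclicity of $F$ itself, which the definition presupposes for the starting field, is the $i=0$ instance of the same subfield argument.
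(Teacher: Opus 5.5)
Your proof is correct and follows the paper's argument: every intermediate field sits between $F$ and $F'$, so it is a subfield of the acyclic field $F'$ and hence acyclic, which makes each addition natural regardless of the order. Your explicit check that neither endpoint of the new arrow already lies in an arrow, using the matching property of $F'$, fills in a detail the paper leaves implicit but later relies on in the proof of Proposition~\ref{prop:countingAddition}.
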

\begin{proof}
    Since subfields of acyclic discrete vector fields are acyclic, 
    $F \cup \{\alpha \rightarrow \beta\}$ is acyclic for any arrow 
    $\{\alpha \rightarrow \beta\} \in F' \setminus F$.
    Therefore, the first addition of an arrow happens naturally.
    Repeat this argument for each arrow in the chosen order.
\end{proof}

Once all of the necessary arrows are added, to create the boundary matrix $\partial_{p} \colon \mathcal{M}_{(p+1)} \rightarrow \mathcal{M}_{(p)}$, we use $F_f$ to determine all critical cells of dimension $p$ and $p+1$. 
We find each simplex of those dimensions and identify the ones that are not in an arrow, which is done in $O(m_p + m_{p+1})$ time.
Then, $\partial_p$ is constructed by taking the rows and columns of $P_p$ that correspond to critical cells and converting their entries to $\Z/2\Z$.
We may then compute persistent homology as normal, performing the matrix reduction algorithm on each boundary matrix, with a cost of $O(C^3)$, where $C$ is the total number of critical cells.

In theory, on an arbitrary simplicial complex, this incurs the standard cost of Algorithm \ref{alg:MatrixReduction}.
In practice, though, this lowers the size of boundary matrices: 
each arrow of $F_f$ removes a row and column from a boundary matrix.
Classifying complexes that have bounds on their number of critical simplices is a direction for future work.

\subsection{Matrix Updates for Vertex Swaps}

When a vertex swap has occurred where the order function changes from $f$ to $f'$, $F_f$ must be updated to $F_{f'}$.
As this happens, the collection of path matrices $\PP$ will update along with our vector field.
Some arrows of $F_f$ must be deleted while new ones must be added, and we must update $\PP$ accordingly.

\begin{lemma}
    Suppose the arrow $\alpha^{(p)} \rightarrow \beta^{(p+1)}$ is deleted from $F_f$ naturally.
    Then no new $F_f$-paths ending at $\alpha$ or starting at $\beta$ are deleted. 
    Additionally, for any $\tau^{(p+1)} \neq \beta$ and $\sigma^{(p)} \neq \alpha$,
    the number of $F_f$-paths from $\tau$ to $\sigma$ decreases by 
    $\Gamma(\tau, \alpha) \Gamma(\beta, \sigma)$.
    \end{lemma}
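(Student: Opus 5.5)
The plan is to reduce the deletion lemma to Proposition~\ref{prop:countingAddition} by reversing time. By definition, deleting $\alpha^{(p)} \rightarrow \beta^{(p+1)}$ naturally from $F'$ to get $F$ means that $F' = F \sqcup \{\alpha \rightarrow \beta\}$, where $F'$ is acyclic and so is $F$. Equivalently, $F'$ is obtained from $F$ by naturally adding $\alpha \rightarrow \beta$.

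First I apply Proposition~\ref{prop:countingAddition} to this addition, with $\Gamma$ denoting path counts in $F$ (the field before the addition, i.e.\ after the deletion). It gives three facts:
\begin{itemize}
\item the $F'$-paths ending at $\alpha$ are exactly the $F$-paths ending at $\alpha$;
\item the $F'$-paths starting at $\beta$ are exactly the $F$-paths starting at $\beta$;
\item for $\tau \neq \beta$ and $\sigma \neq \alpha$, the number of $F'$-paths from $\tau$ to $\sigma$ is $\Gamma_F(\tau,\sigma) + \Gamma_F(\tau,\alpha)\Gamma_F(\beta,\sigma)$.
\end{itemize}
Read in the opposite direction, the first two facts say that deletion removes no path ending at $\alpha$ or starting at $\beta$. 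The third says the count from $\tau$ to $\sigma$ drops by exactly $\Gamma_F(\tau,\alpha)\Gamma_F(\beta,\sigma)$.

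The one point needing care is which field the $\Gamma$'s in the statement refer to. The corrections used in the matrix update are read from the current field $F'$. By the first two facts, $\Gamma_{F'}(\tau,\alpha) = \Gamma_F(\tau,\alpha)$ and $\Gamma_{F'}(\beta,\sigma) = \Gamma_F(\beta,\sigma)$. So the decrement is the same whether it is computed from $F'$ or from $F$.

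I should also confirm that every path destroyed by the deletion actually uses the arrow $\alpha \rightarrow \beta$. That is immediate: every $F$-path is still an $F'$-path, since the face relations are unchanged and $F \subset F'$. This is the same observation as the opening line of the proof of Proposition~\ref{prop:countingAddition}.

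I expect no real obstacle here. The main work is bookkeeping, namely stating clearly that the $\Gamma$ values for paths into $\alpha$ and out of $\beta$ are invariant across the change. That invariance is what makes the subtraction well defined and computable, mirroring the addition update with $\vec w$ but with the sign flipped. Over $\Z/2\Z$, that sign flip makes the deletion update the same operation as the addition update.
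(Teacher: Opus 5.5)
Your proposal is correct and matches the paper's proof: both recast the natural deletion as a natural addition of $\alpha \rightarrow \beta$ to the smaller field, then read the change in path counts off Proposition~\ref{prop:countingAddition}. You also check explicitly that $\Gamma(\tau,\alpha)$ and $\Gamma(\beta,\sigma)$ are the same before and after the change, which settles which field the decrement refers to; the paper leaves this implicit.
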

\begin{proof}
    Let $\tilde{F} \coloneq F_f - \{\alpha \rightarrow \beta \}$.
    This deletion occurs naturally by assumption; therefore, the addition of $\alpha \rightarrow \beta$ to $\tilde{F}$ occurs naturally as well.
    The path matrix of $F_f$ must be the same after the deletion and addition of this arrow.
    Therefore, the change in the number of paths must exactly match that of Prop. \ref{prop:countingAddition}. 
\end{proof}

We therefore wish for the change from $F_f$ to $F_{f'}$ to occur via natural additions and deletions so that we know how $\PP$ changes.
Since $F_f \cap F_{f'}$ is a subfield of both $F_f$ and $F_{f'}$, we can use that as a midpoint.
We delete all the unnecessary arrows of $F_f$, which occurs naturally, and then add arrows naturally until we reach $F_{f'}$.
Therefore, for implementation as in Algorithm \ref{alg:covf_update}, rather than updating $F_f$ directly, we advise keeping a list of arrows that must be deleted and a list of arrows that must be added.
Once all of $\overline \St(x) \cap \overline \St(y)$ has been accounted for, delete all necessary arrows, updating $\PP$ for each one.
Afterwards, add all of the new arrows, again updating $\PP$ each time.
The above process is $O(|\overline \St(x) \cap \overline \St(y)| M^2)$, as each $\sigma \in \overline \St(x) \cap \overline \St(y)$ may contribute to both an addition and a deletion (recall that $M \coloneq \max_p\{ m_p\}$).

\section{Code and Results}


\begin{figure}
\centering
\begin{subfigure}{.5\textwidth}
  \centering
  \includegraphics[width=0.8\linewidth]{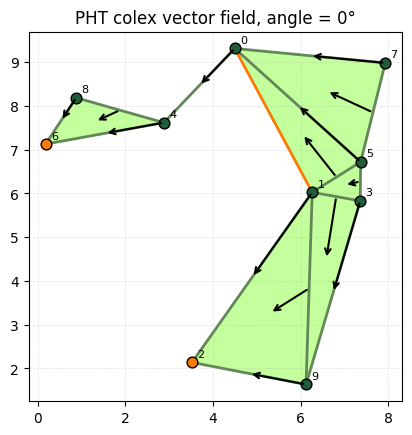}
  \label{fig:sub1}
\end{subfigure}%
\begin{subfigure}{.5\textwidth}
  \centering
  \includegraphics[width=0.8\linewidth]{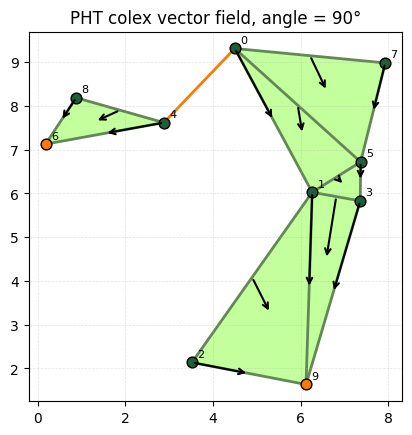}
  \label{fig:sub2}
\end{subfigure}
\captionsetup{width=.8\linewidth}
\caption{Example of a randomly generated alpha complex with colex vector field induced by the height function from different angles. Critical simplices are displayed in orange.}
\label{fig:examples}
\end{figure}


We provide proof-of-concept code, publicly available \href{https://github.com/kwoytowich/Lower-Star-Vineyard-Updates-with-Discrete-Morse-Theory.git}{here}. It is written in Python and builds off of ceREEBerus's LowerStar structure \cite{cereeberus}, a version of Gudhi's SimplexTree \cite{gudhi:FilteredComplexes} with tools to update filtration values for lower star filtrations.
We also include a demonstration of its runtime against a more standard method of computing persistence by Algorithm \ref{alg:MatrixReduction}.

In the setting of the Persistent Homology Transform (PHT), each direction on $\mathbb{S}^{d-1}$ yields a height function on $V$.
Apart from the finitely many directions for which two vertices have the same height, this in turn induces an order function $f$ by ranking the vertices.
The sphere can be broken up into finitely many strata where the order function remains constant \cite{Curry2022}.
Thus, one could traverse around the sphere, and for each order function, compute $F_f$ and thus the persistent homology from scratch.
However, if we know how the order function changes, we can update the colex vector field more directly.

We perform the PHT on the alpha complex of a point cloud of $n$ points in $\R^2$, randomly sampled from a uniform distribution.
A complex with $n$ vertices in general position stratifies $\mathbb{S}^1$ into $n^2 - n$ parts, each with a unique order function on the vertices.
Traversing between these strata causes the order function to change via an order-adjacent swap.
Our implementation processes one vertex swap at a time, computing $n^2 - n$ individual persistence diagrams.
We do this using both the colex vector field approach and a standard persistence reduction approach, where each direction constructs a boundary matrix from the colex order and computes the pairs.
The non-diagonal pairs of these two approaches are equivalent, but the extra machinery of the colex vector field means that many diagonal pairs are not calculated, though they can be recovered by the arrows in the field.
We compare the runtime of the two approaches below.

\begin{table}[h!]
\centering
\begin{tabular}{|l|l|l|l|l|l|l|}
\hline
         & $n = 15$ & $n = 15$ & $n = 20$ & $n = 20$ & $n = 25$ & $n = 25$ \\ 
         & $r = 3$ & $r = 6$ & $r = 3$ & $r = 6$ & $r = 3$ & $ r = 6$ \\ \hline
Colex (in sec.)   &    0.3021    &    0.3080      &   1.2099     &   1.2227     &    3.5588      &     3.5771      \\ \hline
Standard (in sec.)  &   0.2982    &   0.4026     &   1.1082    &    1.4235       &      2.9858    &     3.6419      \\ \hline
Avg.~no.~crit. cells &  4.5599      &  3.0229      &    5.1242    &   3.5461      &     5.4912           &    3.6684             \\ \hline
\end{tabular}
\captionsetup{width=.8\linewidth}
\caption{The time taken for computing the PHT using the Morse colex update method presented, along with the standard method (pointwise application of Alg.~\ref{alg:MatrixReduction}) for alpha complexes computed on a $n$ randomly sampled points using threshold parameter $r$.}
\label{table:times}
\end{table}
In Table \ref{table:times}, the entries of the Colex and Standard rows are the average runtimes in seconds over 100 random alpha complexes, with $n$ points and threshold $r$. 
For each complex, the $n$ points are chosen from a uniform distribution over $(0, 10) \times (0, 10) \subset \R^2$.
In Figure \ref{fig:examples}, one such generated complex is depicted, and the colex vector field associated with two different height functions are overlaid.
Each column has its own fixed set of 100 complexes, and the bottom row computes the average number of critical cells per stratum over those generated complexes. 
These computations were performed on an AMD Ryzen 5 processor with 32 GB of RAM.

\section{Discussion}
In this paper, we formalize the notion of a discrete vector field for a simplicial filtration defined by an order function on the vertices and equipped with colexicographic ordering. We demonstrate that the colex vector field gives rise to smaller boundary matrices that only take account of a subset of simplices within the complex, so-called critical simplices. We use the fact that the persistent homology summary given by this colex vector field coincides with the ordinary persistent homology to compute persistence in this setting, thereby yielding improved computational efficiency at least in practice. We implement an efficient procedure for updating the colex vector field and, consequently, the path and boundary matrices, when adjacent vertices swap in the filtration. We show that our method not only produces the same persistence diagram, but also yields the same persistence pairs for the lower star filtration given by the standard persistence algorithm. Some pairs do not get calculated by the Morse complex, but they all correspond to arrows and can be recovered for diagonal points.  

We also present some complexity analysis and code implementation to demonstrate the practicability of our discrete Morse theory-inspired method of computing and updating persistent homology. Our experimental results show practical improvements in computation time as compared with the standard persistence reduction algorithm, even though we cannot guarantee that asymptotic runtimes are better. The colex vector field approach performs well in complexes with a higher threshold value $r$, corresponding to more complexes and higher connectivity. In these complexes, the number of critical cells is lower than the total number of simplices, resulting in smaller boundary matrices on which reduction needs to be done. This also means that better implementations could yield better speedups. While these experiments are encouraging, a comprehensive evaluation comparing against optimized implementations and large-scale datasets is left for future work. Potential avenues for improving on the code include additional optimization and a C\texttt{++} port.

\section*{Declarations}
The authors have no competing interests to declare that are relevant to the content of this article.


    




\nocite{*}
\printbibliography
\end{document}